\newtheorem{theo}{Theorem}[section]
\newtheorem{prop}[theo]{Proposition}
\newtheorem{lem}[theo]{Lemma}
\theoremstyle{remark}
\newcommand{\R}{{\mathbb{R}}}
\newcommand{\C}{{\mathbb{C}}}
\newcommand{\Z}{{\mathbb{Z}}}
\newcommand{\al}{\alpha} 
\newcommand{\be}{\beta} 
\newcommand{\Om}{\Omega} 
\newcommand{\ga}{\gamma}
\newcommand{\Si}{\Sigma} 
\newcommand{\si}{\sigma}
\newcommand{\op}{\operatorname}
\newcommand{\mo}{\mathcal{M}}
\newcommand{\moir}{\mathcal{M}^{0}} 
\newcommand{\Ci}{\mathcal{C}^{\infty}}
\newcommand{\Co}{\mathcal{C}}
\newcommand{\rep}{\mathcal{R}}
\newcommand{\repir}{\mathcal{R}^0}
\newcommand{\jac}{\mathcal{J}}
\newcommand{\uun}{\op{U}(1)}
\newcommand{\Addresses}{{% additional braces for segregating \footnotesize
  \bigskip
  \footnotesize

  Laurent~Charles, \textsc{ Institut de Math\'{e}matiques de Jussieu-Paris Rive Gauche, Sorbonne Universit\'{e}s, UPMC Univ Paris 06, F-75005, Paris, France}\par\nopagebreak
  \textit{E-mail address}: \texttt{laurent.charles@imj-prg.fr}

  \medskip

  Lisa~Jeffrey, \textsc{ Mathematics Departement, University of Toronto, Toronto, ON, Canada M5S 2E4 }\par\nopagebreak
  \textit{E-mail address}: \texttt{jeffrey@math.toronto.edu}
}}
\title{Torsion and symplectic volume in Seifert manifolds} 
\author{L. Charles and L. Jeffrey\thanks{Supported in part by a grant from NSERC}}
\begin{document}

\maketitle

\begin{abstract}For any oriented Seifert manifold $X$ and compact connected Lie group $G$ with finite center, we relate the Reidemeister density of the moduli space of representations of the fundamental group of $X$ into $G$ to the Liouville measure of some moduli spaces of representations of surface groups into $G$. 
\end{abstract}

\section{Introduction}

For any Lie group $G$ and manifold $Y$, the moduli space $\mo (Y )$ of conjugacy classes of representations of $\pi_1(Y)$ in $G$, has natural differential geometric structures. If $\Si$ is a closed oriented surface, $\mo (\Si)$ has a symplectic stucture defined via intersection pairing \cite{AtBo}, \cite{Go}. More generally, if $\Si$ is a compact oriented surface and $u \in \mo ( \partial \Si)$, the  subspace  $\mo ( \Si ,u )$ of $\mo (\Si)$ consisting of the representations restricting to $u$ on the boundary has a natural symplectic structure.  If $X$ is a closed 3-dimensional oriented manifold, $\mo (X)$ has a natural density $\mu_X$ defined from Reidemeister torsion \cite{Wi}. 

In this article, we relate these structures for $X$ any oriented Seifert manifold  and $\Si$ a convenient oriented surface embedded in $X$. We will prove that when $G$ is compact with finite center, the subspace $\moir (X) \subset \mo (X)$ of irreducible representations, is a smooth manifold covered by disjoint open subsets $O_\al$, such that each $O_\al$ identifies with $\moir (\Si , u_\al)$ for some $u_\al \in \mo (\partial \Si)$. Furthermore, on each $U_\al$ the canonical density $\mu_X$ identifies, up to some multiplicative constant depending on $\al$, with the Liouville measure of the symplectic structure of $\moir ( \Si, u_{\al})$.

Our main motivation is the Witten's asymptotic conjecture, which predicts that the Witten-Reshetikhin-Turaev invariant of a 3-manifold $X$ has a precise asymptotic expansion in the large level limit. This expansion is a sum of oscilatory terms, whose amplitudes are function of the Reidemeister volume of the components of $\mo (X)$. In the case where $X$ is a Seifert manifold, some of these amplitudes are actually function of the symplectic volumes of the moduli spaces $\mo^0 ( \Si , u)$, \cite{Ro}, \cite{oim}. So a relation between Reidemeister and symplectic volumes was expected. At a more general level, it is known that the Chern-Simons theory on a Seifert manifold can be interpreted as two-dimensional Yang-Mills theory \cite{BeWi}.

Let us state our results with more detail and then discuss the related literature.

\subsection*{Statement of the main result} 
The Seifert manifolds we will consider are the oriented closed connected three manifold equipped with a locally free circle action. Any such manifold may be obtained as follows. 
Let $\Si$ be an oriented compact surface with $n \geqslant 1$ boundary components $C_1$, \ldots, $C_n$. Let $D $ be the standard closed disk of $\C$. Let $\varphi_i$ be an orientation reversing diffeomorphism from $\partial D \times S^1$ to $C_i \times S^1$. Let $X$ be the manifold obtained by gluing $n$ copies of $D \times S^1$ to $\Si \times S^1$  through the maps $\varphi_i$. We have $[\varphi_i (\partial D)] = - p_i [C_i] + q_i [S^1]$  in $H_1( C_i \times S^1)$ where $p_i$, $q_i$ are two relatively prime integers. We assume that $p_i \geqslant 1$ for all $i$. 

Let $G$ be a compact connected Lie group with finite center. For $Y = X$, $\Si$, $C_i$ or $S^1$, we denote by $\mo (Y)$ (resp. $\moir (Y)$) the set of representations (resp. irreducible representations) of $\pi_1 (Y)$ in $G$ up to conjugation. 
 Since $C_i$ and $S^1$ are oriented circles, we can identify $\mo (C_i)$ and $\mo (S^1)$ with the set $\Co (G)$ of conjugacy classes of $G$. For any $u \in \Co (G)^n$, we denote by $\moir ( \Si, u)$ the subset of $\moir (\Si)$ consisting of the representations whose restriction to each $C_i$ is $u_i$. Recall that $\moir ( \Si , u)$ is a smooth symplectic manifold. 

For any $(u,v) \in \Co (G)^{n+1}$, we denote $\moir (X, u, v)$ the subset of $\moir (X)$ consisting of representations whose restriction to each $C_i$ is $u_i$ and to $S^1$ is $v$. Let $\mathcal{P}$ be the subset of $\Co (G)^{n+1}$ consisting of the $(u,v)$ such that $\moir (X, u,v)$ is non empty.

\begin{theo}\label{theo:smooth}
 $\moir (X)$ is a smooth manifold, whose components may have different dimensions. For any $[\rho] \in \moir (X)$, the tangent space $T_{[\rho]} \moir (X)$ is canonically identified with $H^1 (X, \op{Ad} \rho)$ where $\op{Ad} \rho$ is the flat vector bundle associated to $\rho$ via the adjoint representation. Furthermore, $\mathcal{P}$ is finite and for any $(u, v) \in  \mathcal{P}$, $\moir (X, u,v)$ is an open subset of $\moir(X)$ and the restriction map $R_{u.v} $ from $\moir (X,u,v)$ to $\moir ( \Si, u )$ is a diffeomorphism.
\end{theo}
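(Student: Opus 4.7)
The plan is to exploit the Seifert structure to reduce every assertion about $\moir(X)$ to a corresponding one about $\moir(\Si, u)$, which is a smooth symplectic manifold by Goldman's construction. The gluing data yields the presentation
\[
\pi_1(X) = \langle \pi_1(\Si), h \mid [h, \ga] = 1 \ \forall \ga \in \pi_1(\Si),\ c_i^{p_i} = h^{q_i},\ i = 1, \ldots, n \rangle,
\]
where $h$ is the class of a regular fiber and $c_i$ is the boundary loop at $C_i$. Since $h$ is central in $\pi_1(X)$, Schur's lemma forces $\rho(h) \in Z(G)$ for every $[\rho] \in \moir(X)$: the commutant of $\rho(\pi_1(X))$ in $G$ equals $Z(G)$ by irreducibility, and $\rho(h)$ lies in that commutant. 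Consequently the assignment $[\rho] \mapsto (([\rho(c_i)])_i, [\rho(h)])$ is continuous from $\moir(X)$ to $\Co(G)^n \times Z(G)$; once $\mathcal{P}$ is shown to be finite, this map is locally constant, so each $\moir(X,u,v)$ is open and closed in $\moir(X)$.

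Finiteness of $\mathcal{P}$ is direct: $Z(G)$ is finite by hypothesis, so there are finitely many $v$, and for each such $v$ the relation $\rho(c_i)^{p_i} = v^{q_i}$ constrains $u_i$ to meet the set $\{g \in G : g^{p_i} = v^{q_i}\}$. Any such $g$ is contained in some maximal torus, and in a fixed maximal torus $T$ the equation has only finitely many solutions (a coset of the $p_i$-torsion subgroup of $T$), on which the finite Weyl group acts; hence only finitely many admissible conjugacy classes $u_i$. To construct the inverse of $R_{u,v}$ for $(u,v) \in \mathcal{P}$, I start from $[\rho_\Si] \in \moir(\Si, u)$ and set $\rho(h) := v$: the conjugacy-class identity $u_i^{p_i} = v^{q_i}$ lifts to the element identity $\rho_\Si(c_i)^{p_i} = v^{q_i}$ because $v^{q_i}$ is central, so every defining relation of $\pi_1(X)$ is respected. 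Irreducibility of $\rho$ is equivalent to that of $\rho_\Si$ since adjoining a central element does not change the commutant of the image, and the extension is unique up to conjugation. This produces a set-theoretic bijection inverse to $R_{u,v}$.

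For the smooth structure and tangent space identification, I would transport the smooth manifold structure of $\moir(\Si, u)$ across $R_{u,v}^{-1}$, making $R_{u,v}$ a diffeomorphism by construction and realising $\moir(X)$ as the disjoint union of the open pieces $\moir(X, u, v)$, whose dimensions may vary with $(u,v)$. To identify the tangent space with $H^1(X, \op{Ad}\rho)$, I would run a Mayer-Vietoris argument for the decomposition $X = (\Si \times S^1) \cup \bigsqcup_{i=1}^{n}(D \times S^1)$ in group cohomology with coefficients in $\op{Ad}\rho$. Because $\rho(h) \in Z(G)$, the bundle $\op{Ad}\rho$ is trivial along every $S^1$-fiber direction, so a Künneth-type splitting reduces the cohomology of $\Si \times S^1$, of each solid torus $D \times S^1$, and of the boundary tori $C_i \times S^1$ to the cohomology of their base pieces. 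A diagram chase should then identify $H^1(X, \op{Ad}\rho)$ with the standard Goldman tangent space of $\moir(\Si, u)$ at $[\rho_\Si]$, via the differential of the extension map.

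The hard part will be precisely this last step: executing the Mayer-Vietoris computation carefully enough to match the description of $T_{[\rho_\Si]} \moir(\Si, u)$ as deformations fixing the boundary conjugacy classes against the contributions to $H^1(X, \op{Ad}\rho)$ coming from the Seifert relations $c_i^{p_i} = h^{q_i}$ and the central value along the fiber. The bookkeeping is routine in spirit but requires treating each solid torus together with its boundary torus so that the fixed-conjugacy-class conditions defining $\moir(\Si, u)$ are reproduced correctly and the resulting isomorphism is seen to be the differential of $R_{u,v}^{-1}$, thereby confirming that the transported smooth structure is the intrinsic one.
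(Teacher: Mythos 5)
Your overall strategy coincides with the paper's: use the centrality of the fiber class $h$ to show $\rho(h)\in Z(G)$, deduce finiteness of $\mathcal P$ and openness of the pieces $\moir(X,u,v)$, and exhibit $R_{u,v}$ as a bijection by extending $\rho_\Si$ with $\rho(h):=v$ (the lift of the conjugacy-class relation $u_i^{p_i}=v^{q_i}$ to an element relation, using that $v^{q_i}$ is central, is exactly the point). The one place you genuinely diverge is the smooth structure. You propose to transport the structure of $\moir(\Si,u)$ through $R_{u,v}^{-1}$, which makes ``$R_{u,v}$ is a diffeomorphism'' true by fiat and pushes all the content into the deferred verification that the transported structure is the intrinsic one; that verification is where the theorem actually lives, and your sketch of it (Mayer--Vietoris plus K\"unneth for $\op{Ad}\rho|_{\Si\times S^1}\cong \op{Ad}\rho|_{\Si}\boxtimes\R_{S^1}$) is the right computation but is not carried out. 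The paper short-circuits this: since your bijection already holds at the level of representation varieties, the image of $\repir(\pi_1(X))$ in $G^{2g+n+1}$ under a set of generators is the finite disjoint union of the $M_u\times\{v\}$ for $(u,v)\in\mathcal P$, where $M_u\subset G^{2g+n}$ is the standard relative representation variety of the surface group, known to be a smooth submanifold; quotienting by $G$ then gives the intrinsic smooth structure, realizes $T_{[\rho]}\moir(X)$ as a subspace of $H^1(X,\op{Ad}\rho)$ via the cocycle description of Appendix \ref{sec:representation-space}, and makes $R_{u,v}$ a diffeomorphism because it is induced by the projection $M_u\times\{v\}\to M_u$. After that, the only cohomological input needed is the exactness of $0\to H^1(X,\op{Ad}\rho)\to H^1(\Si,\op{Ad}\rho)\to H^1(\partial\Si,\op{Ad}\rho)\to 0$ (the paper's Theorem \ref{theo:homologie_Seifert}, proved by exactly the Mayer--Vietoris/K\"unneth argument you outline), which upgrades the inclusion $T_{[\rho]}\moir(X)\subset H^1(X,\op{Ad}\rho)$ to an equality by comparing with $T_{[\rho]}\moir(\Si,u)=\ker\bigl(H^1(\Si,\op{Ad}\rho)\to H^1(\partial\Si,\op{Ad}\rho)\bigr)$. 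I recommend you replace the transport-of-structure step by this representation-variety argument; everything else in your proposal is sound.
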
 

For any irreducible representation $ \rho$ of $\pi_1 (X)$ in $G$, the homology groups $H_0 ( X, \op{Ad} \rho)$ and $H_3 ( X, \op{Ad} \rho )$ are trivial. By Poincar\'e duality, $H_2 (X, \op{Ad} \rho)$ is the dual of $H_1 (X, \op{Ad} \rho)$. So the Reidemeister torsion of $\op{Ad} \rho$ is a non vanishing element of $ \bigl( \det H_1 ( X, \op{Ad} \rho ) \bigr) ^{-2}$ well-defined up to sign. Consequently, the inverse of the square root of the torsion is a density of $H^1 (X, \op{Ad} \rho)$. Since $H^1 (X, \op{Ad} \rho)$ identifies with the tangent space of $\moir(X)$ at $\rho$, we define in this way a density $\mu_X$ on $\moir (X)$. 

For any $u \in \Co (G)$ and $[\rho] \in \moir ( \Si, u)$,  the tangent space $T_{[\rho]} \moir (\Si, u)$ is identified with the kernel of the morphism $ H^1( \Si, \op{Ad} \rho) \rightarrow H^1( \partial \Si , \op{Ad} \rho)$. The symplectic product of $T_{[\rho]} \moir (\Si, u)$ is induced by the intersection product of  $H^1( \Si, \op{Ad} \rho)$ with $H^1( \Si, \partial \Si, \op{Ad} \rho)$. We denote by $\mu_u$ the corresponding Liouville measure of $\moir (\Si, u)$. 

As a last definition, let $\Delta : \Co (G) \rightarrow \R$ be the function given by 
$$ \Delta ( u ) = \bigl| {\det} _{H_g} (\op{Ad}_g - \op{id} ) \bigr|^{1/2}  
$$   
where $g $ is any element in the conjugacy class $u$ and $H_g$ is the orthocomplement of $\ker ( \op{Ad}_g - \op{id})$. Equivalently, let $\mathfrak{t}$ be the Lie algebra of a maximal torus of $G$, $R \subset \mathfrak{t}^*$ be the corresponding set of real roots  and $R_+ \subset R$ be a set of positive roots. Then for any $X \in \mathfrak{t}$, 
$$ \Delta ( [ e^X ]) = \prod_{\al \in R_+ ; \; \al (X) \neq 0} 2| \sin ( \pi \al (X))| $$

\begin{theo} \label{theo:main-result}  
For any $(u,v) \in \mathcal{P}$, we have on $\moir (X , u, v)$
$$ \mu_X =  \Biggl(\prod_{i=1}^{n} \frac{ \Delta ( u_i^{r_i}) }{p_i^{(\dim G - \dim u_i )/2}} \Biggr) \;   R_{u,v} ^* \mu_u $$
where $R_{u,v}$ is the restriction map from $\moir (X,u,v)$ to $\moir ( \Si, u)$ and  for each $i$, $r_i$ is any inverse of $q_i$ modulo $p_i$, and $u_i^{r_i} \in \Co (G)$ is the conjugacy class containing the $g^{r_i}$ for $g \in u _i$.  
\end{theo}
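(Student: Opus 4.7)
The approach is to use a Mayer-Vietoris decomposition of $X$ adapted to its Seifert structure together with the multiplicativity of Reidemeister torsion under gluing. Write $X = W \cup V_1 \cup \cdots \cup V_n$ where $W = \Si \times S^1$ and each $V_i = D \times S^1$, meeting along the boundary tori $T_i = C_i \times S^1$ via $\varphi_i$. For an irreducible $\rho$, the gluing formula expresses $\tau(X, \op{Ad}\rho)$ as the product $\tau(W) \cdot \prod_i \tau(V_i)/\tau(T_i)$ corrected by the torsion of the Mayer-Vietoris long exact sequence in $\op{Ad}\rho$-cohomology. Since $\mu_X$ is the reciprocal square root of $\tau$ viewed as a density on $H^1(X, \op{Ad}\rho) \cong T_{[\rho]}\moir(X,u,v)$ via Theorem~\ref{theo:smooth}, the identity reduces the theorem to a comparison of densities pulled back to $\moir(\Si, u)$ along $R_{u,v}$.

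The next step is to compute each local contribution. Irreducibility of $\rho$ and finite center of $G$ force $v = \rho(S^1)$ to lie in $Z(G)$, so $\op{Ad}_v = \op{id}$ and $\Delta(v) = 1$. The orientation-reversing condition on $\varphi_i$ pins down its matrix in the bases $(\partial D, S^1)$ and $(C_i, S^1)$ to have first column $(-p_i, q_i)$ and second column $(r_i, b_i)$ with $p_i b_i + q_i r_i = 1$; hence the core of $V_i$ represents $r_i[C_i] + b_i[S^1]$ in $T_i$, and its $\rho$-holonomy lies, up to central twists, in the conjugacy class $u_i^{r_i}$. The Reidemeister torsion of $V_i$, computed as that of a circle with monodromy $\op{Ad}_{u_i^{r_i}}$, then contributes $\Delta(u_i^{r_i})^2$ on the orthocomplement of its invariants, while the torsion of $T_i$ contributes $\Delta(u_i)^2$ once centrality of $v$ is exploited. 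For the central piece $W = \Si \times S^1$, a K\"unneth argument based on centrality of $v$ identifies $H^*(W, \op{Ad}\rho)$ with $H^*(\Si, \op{Ad}\rho_\Si) \oplus H^{*-1}(\Si, \op{Ad}\rho_\Si)$ and matches $\tau(W)$ with a power of the relative Reidemeister torsion of $(\Si, \partial\Si)$, which by Witten's identification is the Liouville density of Goldman's symplectic form on $\moir(\Si, u)$ adjusted by the standard boundary product $\prod_i \Delta(u_i)^2$. When the local contributions are combined, the $\Delta(u_i)^2$ factors from $W$ and from $T_i$ cancel, only the $\Delta(u_i^{r_i})^2$ contributions from $V_i$ survive, and taking reciprocal square roots produces the numerator $\prod_i \Delta(u_i^{r_i})$ of the formula.

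The main obstacle is the extraction of the denominator $p_i^{(\dim G - \dim u_i)/2} = p_i^{\dim \ker(\op{Ad}_{u_i} - \op{id})/2}$, which arises from the change-of-basis correction inherent in the Mayer-Vietoris torsion identity. On the integer lattice $H_1(T_i; \Z)$, the basis $(C_i, S^1)$ natural from the surface side and the basis $(\partial D, S^1)$ natural from the solid-torus side are related by the matrix $\varphi_{i*}$ described above, and the gluing formula requires a determinant adjustment once the $\op{Ad}\rho$ coefficients are introduced. When this adjustment is decomposed along the splitting of $\op{Ad}\rho$ into $\ker(\op{Ad}_{u_i} - \op{id})$ and its orthogonal complement, the orthogonal part absorbs cleanly into the $\Delta(u_i)$ bookkeeping while the invariant part produces the $p_i^{\dim \ker(\op{Ad}_{u_i} - \op{id})/2}$ factor concentrated on the center summand. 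Verifying this factor with the correct exponent and sign, and checking that $R_{u,v}^*$ intertwines the resulting density with $\mu_u$ in accordance with Goldman's normalization of the intersection pairing, is the most delicate step; once it is in place, Theorem~\ref{theo:main-result} follows by taking the inverse square root of the assembled torsion identity.
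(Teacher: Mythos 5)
Your overall strategy (Mayer--Vietoris for the torsion, local contributions from $\Si\times S^1$, the tori $C_i\times S^1$ and the solid tori, then inverse square roots) is the paper's strategy, and your identification of the solid-torus contribution $\Delta(u_i^{r_i})^2$ via the core circle $r_i[C_i]+s_i[S^1]$ is exactly right. But two of your local computations are wrong in a way that hides the real difficulty. By the product property of torsion, $\tau(\op{Ad}\rho,\,C_i\times S^1)$ and $\tau(\op{Ad}\rho,\,\Si\times S^1)$ are both equal to $1$, because each space has a circle factor and $\chi(S^1)=\chi(C_i)=0$ (so $\tau(E_1\boxtimes E_2)=\tau(E_1)^{\chi(E_2)}\otimes\tau(E_2)^{\chi(E_1)}=1$). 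The tori therefore do not contribute $\Delta(u_i)^2$, and $\tau(\Si\times S^1)$ does not ``match a power of the relative torsion of $(\Si,\partial\Si)$'': your proposed cancellation of $\Delta(u_i)^2$ factors is vacuous, and your proposed mechanism for producing the Liouville density --- Witten's theorem applied to the torsion of the surface piece --- is precisely the route the authors say they tried and could not make work; their proof deliberately avoids Witten's result.

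The missing idea is where the symplectic volume actually comes from. Since $H_1(X,\op{Ad}\rho)$ and $H_2(X,\op{Ad}\rho)$ are nonzero, $\tau(\op{Ad}\rho)$ lives in $(\det H_1)^{-1}\otimes\det H_2$, and the Mayer--Vietoris bookkeeping expresses it as $\bigl(\prod_i p_i^{\dim V_i}\bigr)\bigl(\prod_i\Delta^{-2}(u_i^{r_i})\bigr)\det\psi$, where $\psi:H_1(X,\op{Ad}\rho)\to H_2(X,\op{Ad}\rho)$ is the comparison isomorphism induced by presenting both groups as $H_1(\Si,\op{Ad}\rho)/\op{Im}f$ (this is also where $\det p=\prod_i p_i^{\dim V_i}$ appears, from the block $\begin{pmatrix} f&0\\ q&p\end{pmatrix}$ on $V_i=\ker(\op{Ad}_{u_i}-\op{id})$ --- your account of the $p_i$ factor is close to this, though it is not a lattice change of basis on $H_1(T_i;\Z)$ but the map $(a,b)\mapsto q_ia+p_ib$ on $V_i\oplus V_i$). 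One then still has to prove, as a separate statement about Poincar\'e duality, that under the identifications $g^*,h^*$ of $H^1(X,\op{Ad}\rho)$ and $H^2(X,\op{Ad}\rho)$ with $K=\ker\bigl(H^1(\Si,\op{Ad}\rho)\to H^1(\partial\Si,\op{Ad}\rho)\bigr)$, the cup-product pairing on $X$ goes over to the symplectic form $\Om$ on $K$; only then does $(\det\psi)^{-1}$ become the square of the Liouville density and the restriction map $R_{u,v}$ intertwine the two measures. Without this duality comparison your argument cannot convert the torsion, which is not a priori a density on $H^1(X,\op{Ad}\rho)$ alone, into something comparable with $\mu_u$.
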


Several definitions require an invariant scalar product on the Lie algebra of $G$: the symplectic structure of $\moir (\Si, u)$, the Poincar\'e duality between $H_1 (X, \op{Ad} \rho)$ and $H_2 ( X, \op{Ad} \rho)$ and the Reidemeister torsion of $\op{Ad} \rho$. Our implicit convention is to choose the same invariant scalar product each time.

During the proof, we will prove interesting intermediate results: 
\begin{itemize} 
\item[-] for any irreducible representation $\rho$ of $\pi_1(X)$ in $G$, the cohomology groups $H^1 (X, \op{Ad} \rho)$ and $H^2 ( X , \op{Ad} \rho)$ both identify naturally with the kernel of the restriction morphism $H^ 1( \Si , \op{Ad} \rho ) \rightarrow H^1 ( \partial \Si, \op{Ad} \rho)$. 
\item[-] by these identifications, the intersection product of $H^1 (X, \op{Ad} \rho)$ with $H^2 ( X , \op{Ad} \rho)$ is sent to the intersection product of $H^1 (\Si, \op{Ad} \rho ) $ with $H^1 (\Si , \partial \Si , \op{Ad} \rho)$. 
\item[-] the Reidemeister torsion $\tau ( \op{Ad} \rho, X)$ is equal to  $C ^{-2} \det \psi$ where $\psi : H_1 (X, \op{Ad} \rho) \rightarrow H_2 (X, \op{Ad} \rho)$ is the map induced by the previous identifications and $C$ is the factor appearing in Theorem \ref{theo:main-result}. 
\end{itemize}
This results are respectively proved in Sections \ref{sec:homology-groups}, \ref{sec:poincare-duality} and \ref{sec:torsion}. Theorem \ref{theo:main-result} is proved in Section \ref{sec:appl-moduli-spac} and Theorem \ref{theo:smooth} in Section \ref{sec:smooth-structure}. 

\subsection*{Related results in the litterature}  
%\begin{itemize}
%\item 
Witten \cite{Wi_to} proved that for $S$ a closed oriented surface, the canonical density $\mu_S$ of $\moir (S)$  defined from Reidemeister torsion, is the Liouville measure of the natural symplectic structure of $\moir (S)$. He also extended this result to surfaces with boundary. We tried to deduce Theorem \ref{theo:main-result} from this by expressing the torsion $\tau (\op{Ad} \rho,X ) $ in terms of $\tau ( \op{Ad} \rho , \Si)$, without any success. Our actual proof does not use Witten's result.  
%\item  

Witten also computed explicitely the volumes $\int_{\moir (\Si,u)  } \mu_u$, cf. \cite{Wi_to}, Formula 4.114. For $G = \op{SU} (2)$ and non central conjugacy classes $u_i$,  Park \cite{Pa} adapted the Witten's method to compute $\int_{\moir (X,u,v)} \mu_X$, $X$ being our Seifert manifold. Computing the volume of $\moir (X,u,v)$ with Theorem \ref{theo:main-result} and Witten's formula, we can extend Park's result to any compact connected Lie group $G$ with finite center and any conjugacy classes $u_i$.
%\item 

McLellan \cite{McLe} proved a result similar to Theorem \ref{theo:main-result} for $G= \op{U}(1)$. To do this, he introduced a Sasakian structure on $X$ and used a computation of the corresponding analytic torsion \cite{RuSe}. We will explain in Section \ref{sec:abelian-case} how we can recover McLellan's result by adapting our method, providing an elementary proof. 

%\end{itemize}

\section{The Seifert manifold $X$} \label{sec:seifert-manifold-x}

Let $g, n,  p_1, q_1, \ldots , p_n, q_n$  be integers such that  
\begin{gather} \label{eq:condition_gnpq}
 g \geqslant 0, \quad n \geqslant 1 \quad \text{ and }  \quad \forall i, \quad p_i, q_i \text{ are coprime and } p_i \geqslant 1. 
\end{gather}
To such a familly we associate the following manifold $X$. 
Let $\Si$ be a compact oriented surface with genus $g$ and $n$ boundary components denoted by $C_1$, \ldots, $C_n$. Let $D$ be a closed disc and for any $i$, let $\varphi_i :\partial D \times S^1 \rightarrow C_i \times S^1 $ be an orientation reversing diffeomorphism such that we have in $H_1 ( S^1 \times C_i)$,  
\begin{gather} \label{eq:coef_chirurgie} 
 [ \varphi_i ( \partial D)] = - p_i [C_i] + q_i [S^1].
\end{gather}
where $\partial D$ and $C_i$ are oriented as boundaries of $D$ and $\Si$ respectively. 
Then $X$ is obtained by gluing $n$ copies of $D\times S^1$ to $\Sigma \times S^1$ along its boundary through the maps $\varphi_i$, 
\begin{gather} \label{eq:Seifert_dec} 
X = (\Si \times S^1) \cup_{\varphi_1 \cup \ldots  \cup\varphi_n} ( D \times S^1) ^{\cup n } .
\end{gather}  
By construction $\Si \times S^1$ is a submanifold of $X$. In the sequel we often consider $\Si$ and $S^1$ as submanifolds of $X$ by identifying $\Si $ with $\Si \times \{ y\}$ and $S^1$ with $\{ x \} \times S^1$, where $x$ and $y$ are some fixed points of $\Si$ and $S^1$ respectively. 

The above definitions are all what we need for this article. Nevertheless, it is interesting to understand this in the context of Seifert manifolds. First, if $X$ is obtained as previously,  we can extend the $S^1$-action on $ \Si \times S^1$ to $X$, so that for any $i$, the action on the $i$-th copy of $D \times S^1$ is free if $p_i=1$ and otherwise it has one exceptional orbit with isotropy $\Z_{p_i}$. Conversely, consider any three dimensional closed connected oriented manifold  $Y$ equipped with an effective locally free action of $S^1$. Then choose $n\geqslant 1 $ orbits $O_1$, \ldots, $O_n$ of $Y$ including all the exceptional ones. Let $T_1$, $\ldots$, $T_n$ be disjoint saturated open tubular neighborhoods of the $O_1$, \ldots, $O_n$ respectively. Let $\Si$ be any cross-section of the action on $Y \setminus (T_1 \cup \ldots \cup T_n)$. For any $i$, set $C_i = (\partial \Si) \cap \overline{T}_i$ and define $p_i$ as the order of the isotropy group of $O_i$ and $q_i$ so that $[C_i] = q_i [O_i]$ in $H_1( \overline{T}_i ) $, where $C_i$ is oriented as the boundary of $\Si$ and $O_i$ by the $S^1$-action. Let $X$ be any manifold associated to  the data $\Si$, $(p_1, q_1)$, \ldots, $(p_n, q_n)$ as in (\ref{eq:Seifert_dec}). 
Then $Y$ is diffeomorphic to $X$, cf. \cite{JaNe}, Theorem 1.5 or the Section 1 of \cite{NeRa} for more details. We can even choose the diffeomorphism between $Y$ and $X$ so that it commutes with the $S^1$-action and fixes $\Si$. The collection 
 $$(g; (p_1, q_1), \ldots, ( p_n,  q_n))$$
is called the unnormalised Seifert invariant of $Y$.

\section{Character space of a Seifert manifold} \label{sec:char-space-seif}

\subsection*{Notations} 
Let $G$ be a Lie group. For any connected topological space $Y$,  we denote by $\mo (Y)$ the set of conjugacy classes of representations of $\pi_1 (Y)$ into $G$ \footnote{A representation of $\pi_1(Y)$ into $G$ is a group morphism from $\pi_1 (Y)$ to $G$. Two representations $\rho, \rho' $ are conjugate if there exists $g \in G$, such that $ \rho'(h)  = g \rho (h) g^{-1}$, $\forall h \in G$.}.
A representation $\rho : \pi_1 (Y) \rightarrow G$ is said to be irreducible if the centraliser of $\rho ( \pi_1 (Y))$ is reduced to the center of $G$. We denote by $\moir (Y)$ the subset of $\mo (Y)$ consisting of conjugacy classes of irreducible representations. 

If $Z$ is a subspace of $Y$, there is a natural morphism $j_*$ from $\pi_1(Z)$ to $\pi_1(Y)$ and consequently a natural map from $\mo (Y)$ to $\mo (Z)$, sending $[\varphi]$ into $[\varphi \circ j_* ]$. For any representation $\rho : \pi_1 (Y) \rightarrow G$, we call $\rho  \circ j_*$ the restriction of $\rho$ to $Y$. 

\subsection{A decomposition of $\moir( X)$}

From now on, $X$ is the Seifert manifold introduced in Section \ref{sec:seifert-manifold-x}. Recall that we view $S^1$ and $\Si$ as submanifolds of $X$.  
\begin{prop} \label{prop:irred}
Let $\rho$ be a representation of $\pi_1 ( X)$ into $G$. Then  $\rho$ is irreducible if and only if its restriction to $\Si$ is irreducible. Furthermore, if $\rho$ is irreducible, then $\rho (S^1)$ is central. Finally, for any $i$, $\rho ( C_i) ^{p_i}$ is conjugate to $\rho ( S^1) ^{ q_i} $.
\end{prop}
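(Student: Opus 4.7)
My approach is to extract just enough information about $\pi_1(X)$ from the Van Kampen theorem applied to the decomposition (\ref{eq:Seifert_dec}) to reduce everything to a centralizer chase in $G$. Fix a basepoint in $\Si \times S^1 \subset X$, let $h \in \pi_1(X)$ be the class of the fiber $S^1$, and, for each $i$, let $c_i \in \pi_1(X)$ be the class of $C_i$ (connected to the basepoint by some path in $\Si$). The three facts I want to establish first are: (i) $h$ is central in $\pi_1(X)$; (ii) $\pi_1(X)$ is generated by the image of $\pi_1(\Si)$ together with $h$; and (iii) $c_i^{p_i} = h^{q_i}$ in $\pi_1(X)$ for every $i$. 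Point (ii) is immediate since every solid torus $D \times S^1$ contributes a generator that already lies in $\pi_1 ( C_i \times S^1) \subset \pi_1(\Si \times S^1)$. Point (iii) follows from (\ref{eq:coef_chirurgie}): the loop $\varphi_i (\partial D )$ represents $c_i^{-p_i} h^{q_i}$ in the abelian group $\pi_1 ( C_i \times S^1)$ and bounds a disk in $D \times S^1$. Finally (i) is true in $\pi_1(\Si \times S^1) = \pi_1 ( \Si) \times \Z \langle h \rangle$, and the relations added by Van Kampen are words in $\pi_1( C_i \times S^1)$, in which $h$ is central, so centrality of $h$ survives in the quotient $\pi_1(X)$.

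\textbf{Centrality of $\rho(S^1)$.} Suppose $\rho$ is irreducible. By (i), $\rho(h)$ commutes with every element of $\rho(\pi_1(X))$, hence lies in its centralizer, which is $Z(G)$ by hypothesis. So $\rho(S^1) \subset Z(G)$, giving the second assertion.

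\textbf{Irreducibility equivalence.} One direction is immediate from $\pi_1(\Si) \to \pi_1(X)$: if the centralizer of $\rho ( \pi_1 (\Si))$ already reduces to $Z(G)$, then a fortiori the centralizer of the larger set $\rho ( \pi_1 (X))$ does. For the converse, assume $\rho$ is irreducible and take $g \in G$ commuting with $\rho(\pi_1(\Si))$. By the previous step $\rho(h) \in Z(G)$, so $g$ automatically commutes with $\rho(h)$. In view of (ii), $g$ then commutes with every generator of $\rho(\pi_1(X))$ and so lies in $Z(G)$. This proves the first assertion.

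\textbf{Boundary holonomy and obstacle.} Applying $\rho$ to the relation (iii) yields $\rho(c_i)^{p_i} = \rho(h)^{q_i}$ in $G$; passing to conjugacy classes absorbs the ambiguity in the choice of path from the basepoint to $C_i$ and gives the third assertion. The only delicate point in the proof is Step (i): I need to be careful that in the iterated amalgamated product produced by Van Kampen, no new non-trivial commutation relation involving $h$ is created. Once that is in place the rest of the argument is a routine play with centralizers.
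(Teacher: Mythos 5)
Your proof is correct and follows essentially the same route as the paper: Van Kampen gives surjectivity of $\pi_1(\Si \times S^1) \to \pi_1(X)$, centrality of the fiber class reduces the irreducibility equivalence to a comparison of the centralizers of $\rho(\pi_1(\Si))$ and $\rho(\pi_1(X))$, and (\ref{eq:coef_chirurgie}) together with the bounding disk gives $c_i^{p_i}=h^{q_i}$. The worry in your final paragraph is moot: once surjectivity is established, $\pi_1(X)$ is a quotient of $\pi_1(\Si)\times \Z$, and the image of a central element under any surjective homomorphism is automatically central, so no inspection of the Van Kampen relations is required.
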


In the statement we slightly abused notation by applying $\rho$ to oriented circles of $X$. Since any loop $\ga$ of $X$ is homotopic to an element of $\pi_1 (X)$ unique up to conjugation,
the conjugacy class of $\rho ( \ga )$ is uniquely defined.  

\begin{proof} 
By Van Kampen theorem, the natural morphism $\pi_1 ( \Si \times S^1) \rightarrow \pi_1 (X)$ is onto. So $\rho : \pi_1 (X) \rightarrow G$ is irreducible if and only if its restriction to $\pi_1 ( \Si \times S^1)$ is irreducible. Since $\rho ( \pi_1 (\Si ) ) \subset \rho ( \pi_1 ( \Si \times S^1))$, if $\rho|_{\Si}$ is irreducible, then $\rho| _{\Si \times S^1}$ is irreducible. Conversely, assume that $\rho| _{\Si \times S^1}$ is irreducible. Since $\pi_1 ( \Si \times S^1) \simeq \pi_1 ( \Si ) \times \pi_1 ( S^1)$, $t = \pi_1 ( S^1)$ is in the centraliser of $\pi_1 ( \Si \times S^1)$, and consequently $\rho(t)$ is central.  This implies that $\rho ( \pi_1 ( \Si \times S^1))$ and $\rho ( \pi_1 ( \Si))$ have the same centraliser. So $\rho|_{\Si}$ is irreducible.  

By Equation (\ref{eq:coef_chirurgie}),  $\rho ( C_i) ^{p_i} $  and  $\rho ( S^1) ^{ q_i}$ are conjugate.
\end{proof} 

Let $Z(G)$ be the center of $G$ and $\Co (G)$ be the set of conjugacy classes. If $u \in \Co (G)$ and $p$ is an integer, $u^p \in \Co (G)$ is defined as the conjugacy class of $g^p$ where $g\in u$. Let $\mathcal{P}$ be the subset of $\Co (G)^n \times Z(G)$ consisting of the pairs $(u,v)$ such that for any $i$, $u^{p_i} = v^{q_i}$. Then by the last part of Proposition \ref{prop:irred}, 
\begin{gather} \label{eq:dec_moirX} 
 \moir ( X) = \bigcup_{(u,v) \in \mathcal{P}} \moir ( X, u, v )
\end{gather}
where $\moir ( X, u,v)$ consists of the $[\rho] \in \moir (X)$ such that $\rho (S^1)\in  v$ and $\rho (C_i) \in u_i$ for any $i$. Denote by $R_{u,v}$ the restriction map
\begin{gather} \label{eq:Ruv}
 R_{u,v} : \moir ( X, u,v) \rightarrow \moir (\Si, u), \qquad [\rho] \rightarrow [\rho|_{\Si}]
\end{gather}
where $\moir ( \Si , u)$ is the subset of $\moir (\Si)$ consisting of the classes $[\rho]$ such that  for any $i$, $\rho (C_i) \in u_i$.

\begin{prop} For any $(u, v ) \in \mathcal{P}$, the map $R_{u,v}$ is a bijection.
\end{prop}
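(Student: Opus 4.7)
The plan is to compute $\pi_1(X)$ explicitly via Van Kampen, construct an explicit inverse to $R_{u,v}$ by prescribing $\rho(t) = v$, and use Proposition~\ref{prop:irred} to see that irreducibility leaves no freedom in this choice.

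First I would apply Van Kampen to the decomposition (\ref{eq:Seifert_dec}). Since $\pi_1(D \times S^1) = \Z$ and the attaching relation (\ref{eq:coef_chirurgie}) sends $[\partial D]$ to $c_i^{-p_i} t^{q_i}$ in $\pi_1(\Si \times S^1) \simeq \pi_1(\Si) \times \Z\langle t \rangle$ (where $c_i \in \pi_1(\Si)$ represents $C_i$), Tietze elimination of the new solid-torus generators yields
$$ \pi_1(X) \;\simeq\; \bigl( \pi_1(\Si) \times \Z\langle t \rangle \bigr) \big/ \langle\!\langle c_i^{p_i} t^{-q_i} : i = 1, \ldots, n \rangle\!\rangle. $$
Hence a representation $\rho : \pi_1(X) \to G$ is the same data as a representation $\bar\rho : \pi_1(\Si) \to G$ together with an element $T = \rho(t) \in G$ which commutes with $\bar\rho(\pi_1(\Si))$ and satisfies $\bar\rho(c_i)^{p_i} = T^{q_i}$ for every $i$.

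For surjectivity of $R_{u,v}$, starting from $[\bar\rho] \in \moir(\Si, u)$, I would let $T \in Z(G)$ denote the unique element of the singleton central conjugacy class $v$ and set $\rho|_{\pi_1(\Si)} = \bar\rho$, $\rho(t) = T$. The commutation $[T, \bar\rho(\pi_1(\Si))] = 1$ is automatic by centrality, and the Seifert relations follow from the defining condition $u_i^{p_i} = v^{q_i}$ of $\mathcal{P}$: the right-hand side is the singleton $\{T^{q_i}\}$, so $\bar\rho(c_i)^{p_i} \in u_i^{p_i}$ must equal $T^{q_i}$. By Proposition~\ref{prop:irred} the resulting $\rho$ is then irreducible, lies in $\moir(X,u,v)$ by construction, and satisfies $R_{u,v}([\rho]) = [\bar\rho]$.

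For injectivity, if $[\rho_1], [\rho_2] \in \moir(X,u,v)$ satisfy $[\rho_1|_\Si] = [\rho_2|_\Si]$, then after replacing $\rho_2$ by a conjugate we may assume $\rho_1|_\Si = \rho_2|_\Si$ as honest homomorphisms. By Proposition~\ref{prop:irred} both $\rho_j(t)$ lie in $Z(G)$ and both lie in the conjugacy class $v$, which is the singleton $\{v\}$ by centrality; hence $\rho_1(t) = \rho_2(t) = v$, forcing $\rho_1 = \rho_2$ on all of $\pi_1(X)$. The only step that really requires care is the Van Kampen presentation and the verification that the Seifert relation is compatible with the prescribed conjugacy classes through $u_i^{p_i} = v^{q_i}$; after that the argument is an immediate consequence of Proposition~\ref{prop:irred} and the fact that central conjugacy classes are singletons.
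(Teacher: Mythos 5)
Your proof is correct and follows the same route as the paper's: the paper's (one-line) argument rests on exactly the two facts you establish, namely that $\pi_1(\Si\times S^1)=\pi_1(\Si)\times\Z\langle t\rangle$ and that $\pi_1(X)$ is this group modulo the normal closure of the $\varphi_i(\partial D)$'s, after which centrality of $v$ forces $\rho(t)=v$ and turns the class equation $u_i^{p_i}=v^{q_i}$ into an equality of group elements. You have simply written out the details the paper leaves to the reader.
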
  

\begin{proof} It is a consequence of the fact that $\pi_1 ( \Si \times S^1) = \pi_1 ( \Si ) \times \pi_1 (S^1)$ and that the kernel of the surjective map $\pi_1 ( \Si \times S^1) \rightarrow \pi_1 (X)$ is the normal subgroup normally generated by the $\varphi_i ( \partial D)$'s.
\end{proof}

\subsection{Topology and manifold structure}  \label{sec:smooth-structure}

From now on, assume that $G$ is compact and has a finite center. 
As explained in appendix \ref{sec:representation-space}, for any compact connected manifold $Y$, $\mo (Y)$ has a natural Hausdorff topology and $\moir (Y)$ is an open subset. 

\begin{lem} 
The set $\mathcal{P}$ is finite. For any $(u,v) \in \mathcal{P}$, $\moir ( X, u,v)$ is an open subset of $\moir (X)$. 
\end{lem}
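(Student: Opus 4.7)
The plan is to establish the finiteness of $\mathcal{P}$ first, and then deduce openness of each stratum $\moir(X,u,v)$ as an essentially formal consequence.

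For finiteness, I would use that conjugacy classes of the compact connected Lie group $G$ are parametrized by $T/W$, where $T$ is a maximal torus and $W$ the Weyl group. For any integer $p \geq 1$, the $p$-th power map $T \to T$ is a surjective homomorphism whose kernel is the finite $p$-torsion subgroup $T[p]$; it therefore induces a finite-to-one self-map on $T/W \cong \Co(G)$. Since $Z(G)$ is finite by hypothesis, and since for each fixed $v \in Z(G)$ and each $i$ the equation $u_i^{p_i} = v^{q_i}$ has only finitely many solutions $u_i \in \Co(G)$, taking the product over $i = 1, \ldots, n$ shows that $\mathcal{P} \subseteq \Co(G)^n \times Z(G)$ is finite.

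For openness, I would introduce the evaluation map
$$ r \colon \mo (X) \longrightarrow \Co (G)^{n+1}, \qquad [\rho] \longmapsto \bigl( \rho(C_1), \ldots, \rho(C_n), \rho(S^1) \bigr). $$
This map is continuous: it is induced by the $G$-equivariant evaluation maps $\op{Hom}(\pi_1(X), G) \to G$ at each of the loops $C_1, \ldots, C_n, S^1$, and passes to the quotients defining $\mo(X)$ and $\Co(G)$. By Proposition \ref{prop:irred} (central value of $\rho(S^1)$ and the relation $\rho(C_i)^{p_i} \sim \rho(S^1)^{q_i}$), the restriction $r|_{\moir(X)}$ takes values in $\mathcal{P} \subset \Co(G)^n \times Z(G)$. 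Since $\Co(G)^{n+1}$ is Hausdorff and $\mathcal{P}$ is finite by the first step, $\mathcal{P}$ is a discrete subspace of the target, so $r|_{\moir(X)}$ is a continuous map to a discrete finite space, hence locally constant. Consequently each fiber $\moir(X,u,v) = (r|_{\moir(X)})^{-1}(u,v)$ is both open and closed in $\moir(X)$.

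The main obstacle is the finiteness step, which depends on the structural fact that on a compact connected Lie group with finite center the $p$-th power map has finite fibers on $\Co(G)$. Once this is at hand, openness follows by a soft topological argument combining the continuity of $r$ with Proposition \ref{prop:irred}, which confines its image to the now-finite set $\mathcal{P}$.
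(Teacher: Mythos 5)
Your proof is correct and follows essentially the same route as the paper: finiteness of $\mathcal{P}$ via the identification $\Co(G)\cong T/W$ and the finite-to-one nature of the power maps, and openness via continuity of the evaluation map $[\rho]\mapsto([\rho(C_1)],\ldots,[\rho(C_n)],[\rho(S^1)])$ into $\Co(G)^{n+1}$ combined with the discreteness of the finite set $\mathcal{P}$. You merely spell out the ``locally constant'' step that the paper leaves implicit.
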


\begin{proof} By identifying $\Co (G)$ with the quotient of a maximal torus by the Weyl group, we easily see that for any $v \in \Co (G)$ and $p\in \Z$, the equation $u^p = v$ has only a finite number of solutions.    This implies that $\mathcal{P}$ is finite. We deduce that the $\moir (X,u,v)$'s are open by applying the following fact: for any compact connected manifold $Y$, for any $x \in \pi_1 (Y)$, the map from $\mo (Y)$ to $\Co (G) $ sending $[\rho]$ into $[\rho (x)]$ is continuous.   
\end{proof}

By Appendix \ref{sec:representation-space}, $\moir (X)$ has a natural open subset $\mo ^{\op{s}, 0} (X)$ which is a manifold. 
Furthermore, it is known that the spaces $\moir (\Si, u)$ are smooth manifolds. 

\begin{prop} \label{prop:smooth} 
We have $\moir (X) = \mo ^{\op{s}, 0} (X)$. Furthermore, for any $(u,v) \in \mathcal{P}$,  $R_{u,v}$ is a diffeomorphism from $\moir (X, u, v)$ to $\moir (\Si, u)$. 
\end{prop}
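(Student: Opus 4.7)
The plan is to use the decomposition
\[ \moir(X) = \bigsqcup_{(u,v)\in\mathcal{P}} \moir(X,u,v) \]
into finitely many open subsets (preceding lemma), together with the bijections $R_{u,v}$ of the previous proposition, to transfer the smooth structure of $\moir(\Si,u)$ onto each piece and to recognize it as the one inherited from $\mo^{\op{s},0}(X)$.

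First, I would verify that for each $[\rho]\in\moir(X)$ the group $H^0(X,\op{Ad}\rho)$ vanishes. Irreducibility of $\rho$ together with finiteness of $Z(G)$ force the centralizer of $\rho(\pi_1(X))$ in $G$ to be finite, so its Lie algebra, which is $H^0(X,\op{Ad}\rho)$, is zero. Combined with the characterization of $\mo^{\op{s},0}(X)$ recalled in Appendix~\ref{sec:representation-space}, this yields $\moir(X)\subseteq\mo^{\op{s},0}(X)$; the reverse inclusion is built into the definition. In particular, $T_{[\rho]}\mo^{\op{s},0}(X)$ is then canonically $H^1(X,\op{Ad}\rho)$.

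Next, I would identify the differential of $R_{u,v}$ at $[\rho]$. Being induced by the inclusion $\Si\hookrightarrow X$, it is the pullback
\[ H^1(X,\op{Ad}\rho) \longrightarrow H^1(\Si,\op{Ad}\rho). \]
Its image lies in the kernel of the restriction $H^1(\Si,\op{Ad}\rho)\to H^1(\partial\Si,\op{Ad}\rho)$, because each boundary circle $C_i$ becomes null-homotopic in $X$ up to the relation $\varphi_i(\partial D)$, and this kernel is exactly $T_{[\rho|_\Si]}\moir(\Si,u)$. By the intermediate result announced in the introduction and established in Section~\ref{sec:homology-groups}, this pullback is an isomorphism onto the kernel. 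Continuity of $R_{u,v}$ is immediate from the continuity of the restriction map on representation varieties, so $R_{u,v}$ is a continuous bijection whose differential is everywhere an isomorphism, hence a diffeomorphism.

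The main obstacle is the unobstructedness of the deformation theory of $\rho$ on the 3-manifold $X$, which on a general closed 3-manifold fails and produces singular character varieties. Here this is precisely what the cohomological identification of Section~\ref{sec:homology-groups} provides: by forcing $H^1(X,\op{Ad}\rho)$ (and, dually via Poincar\'e duality, $H^2(X,\op{Ad}\rho)$) to coincide with the surface-theoretic kernel, the Kuranishi obstruction vanishes and smoothness is inherited from $\moir(\Si,u)$.
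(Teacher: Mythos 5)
The central content of this proposition is the inclusion $\moir(X) \subseteq \mo^{\op{s},0}(X)$, i.e.\ that every irreducible representation of $\pi_1(X)$ is a \emph{smooth point} of the representation variety in the sense of Appendix \ref{sec:representation-space}, and this is exactly the point your argument does not establish. Your first justification --- that $H^0(X,\op{Ad}\rho)=0$ ``combined with the characterization of $\mo^{\op{s},0}(X)$'' gives the inclusion --- conflates two different conditions: the vanishing of $H^0$ is a restatement of irreducibility (membership in $\rep^0$), whereas membership in $\rep^{\op{s}}$ requires that the image of the representation space under $\xi_a$ be locally a smooth submanifold of $G^N$ near $\rho$, an independent statement about the solution set of the relations of $\pi_1(X)$. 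Your second justification is circular: you invoke the identification of $T_{[\rho]}\moir(X)$ with $H^1(X,\op{Ad}\rho)$ (Lemma \ref{lem:tangent}, which the paper proves \emph{after}, and using, Proposition \ref{prop:smooth}) to claim the ``Kuranishi obstruction vanishes''; but the computation of Section \ref{sec:homology-groups} gives no such thing --- here $H^2(X,\op{Ad}\rho)\cong H^1(X,\op{Ad}\rho)$ is nonzero in general, so there is no automatic unobstructedness, and knowing the dimensions of $H^1$ and $H^2$ does not force the quadratic obstruction map to vanish. The same circularity affects your treatment of $R_{u,v}$: you discuss its differential on $H^1(X,\op{Ad}\rho)$ before knowing that $\moir(X,u,v)$ is a manifold with that tangent space.

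The paper avoids deformation theory altogether. Writing $\pi_1(X)$ with generators $(x,y,z,t)$ coming from $\pi_1(\Si)\times\pi_1(S^1)$, it identifies $\repir(\pi_1(X))$ inside $G^{2g+n+1}$ with the finite union over $(u,v)\in\mathcal{P}$ of the sets $M_u\times\{v\}$, where $M_u\subset (G^{2g+n})^0$ is the relative representation variety of the surface group with boundary holonomies constrained to lie in the classes $u_i$; smoothness of $M_u$ is a known fact about surfaces with boundary, and finiteness of $\mathcal{P}$ makes the union a submanifold of $G^{2g+n+1}$. This simultaneously yields $\moir(X)=\mo^{\op{s},0}(X)$ and exhibits $R_{u,v}$ as the projection $M_u\times\{v\}\rightarrow M_u$, hence a diffeomorphism. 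To repair your outline you must replace the cohomological smoothness argument by this explicit identification (or an equivalent direct argument); the exact sequences of Section \ref{sec:homology-groups} only become relevant afterwards, to identify the tangent spaces.
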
 

 It is possible that the various $\mo ^0 (X, u,v)$ have different dimensions. Actually, 
$$ \dim  \mo ^0 ( \Si, u) = 2(g -1) \dim G + \sum_{i=1}^{n} \dim u_i .$$

\begin{proof} 
Let $u \in \Co (G)^n$ and consider the set $M_u$ of $(a,b,c) \in (G^{2g+n})^0 $ satisfying the relations
$$ [a_1, b_1] \ldots [a_g, b_g] c_1\ldots c_n = \op{id}, \qquad c_i \in u_i, \quad \forall i .$$ 
Here we used the same notation $(G^{2g+n})^0$ as in Appendix \ref{sec:representation-space}. It is known that $M_u$ is a smooth submanifold of $G^{2g +n}$. 

Choose a standard set of generators $(x,y,z)$ of $\pi_1 (\Si)$ and let $t\in \pi_1 (X)$ be isotopic to $S^1$.  The map $\pi_1 ( \Si \times S^1) \rightarrow \pi_1 ( X)$ being onto,  $(x,y,z,t)$ is a set of generators of $\pi_1 (X)$. Through these generators, $\repir (\pi_1(X))$ gets identified with a subset $A$ of $G^{2g+n+1}$ as explained in Appendix \ref{sec:representation-space}. By the decomposition (\ref{eq:dec_moirX}), $A$ is the union of the $M_u \times \{v\}$ where $(u,v)$ runs over $\mathcal{P}$. $\mathcal{P}$ being finite, $A$ is a submanifold of $G^{2g+n+1}$, which shows that $\repir (\pi_1(X)) = \rep ^{\op{s} , 0 } ( \pi_1(X))$ in the notation of Appendix \ref{sec:representation-space} and consequently that $\mo ^{\op{s}, 0} (X) = \moir (X)$. For any $(u,v) \in \mathcal{P}$, the projection $M_u \times \{ v \} \rightarrow M_u$ being a diffeomorphism, we conclude that $R_{u,v}$ is a diffeomorphism. 
\end{proof}

Consider again a compact connected manifold and a representation $\rho$ of $\pi_1 (Y)$ in $G$. Composing $\rho$ with the adjoint representation, the Lie algebra $\mathfrak{g}$ becomes a $\pi_1(Y)$-module. Denote by $H ^{\bullet} ( \pi_1(Y) , \op{Ad} \rho)$ the group cohomology with coefficient in $\mathfrak{g}$. Alternatively, we may consider the flat vector bundle $\op{Ad} \rho \rightarrow Y$ associated to $\rho$ via the adjoint representation. Let $H ^{\bullet} ( Y, \op{Ad} \rho)$ be the cohomology of $Y$ with local coefficient. Then for $j=0$ or $1$, $H^ j (\pi_1(Y) , \op{Ad} \rho) \simeq H^j ( Y, \op{Ad} \rho)$.

\begin{lem} \label{lem:tangent}
For any irreducible representation $\rho$ of $\pi_1(X)$, we have a natural identification between $H^1 ( X, \op{Ad} \rho ) $ and $ T_{[\rho]} \moir (X)$. 
\end{lem}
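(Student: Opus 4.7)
The plan is to identify $T_{[\rho]}\moir(X)$ with group cohomology by deformation theory in the representation variety, and then invoke the comparison between group cohomology and cohomology with local coefficients that is recalled in the paragraph just above the lemma.

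First I would pass from the moduli space back to the representation variety. By Proposition \ref{prop:smooth}, near $[\rho]$ the space $\moir (X)$ coincides with $\mo^{\op{s},0}(X)$, which by the appendix on representation spaces is the quotient of an open subset of $\repir (\pi_1(X))$ by the conjugation action of $G$. Since $\rho$ is irreducible, the stabilizer of $\rho$ under conjugation is the center $Z(G)$, which is finite; hence the orbit through $\rho$ is a smoothly embedded submanifold and the projection to the quotient has differential whose kernel is precisely the tangent space to the orbit.

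Second, a standard deformation-theoretic calculation identifies the relevant pieces with group cocycles and coboundaries. A smooth family $\rho_t$ with $\rho_0=\rho$ may be written as $\rho_t(\ga)=\exp(t\,u(\ga)+O(t^2))\,\rho(\ga)$; the condition that each $\rho_t$ be a homomorphism to first order in $t$ is exactly the cocycle condition
$$ u(\ga \de)=u(\ga)+\op{Ad}_{\rho(\ga)} u(\de), $$
so the tangent space $T_\rho\repir(\pi_1(X))$ is $Z^1(\pi_1(X),\op{Ad}\rho)$. Similarly, differentiating $\rho_t=g_t\rho g_t^{-1}$ with $g_0=e$, $\dot g_0=\xi\in \mathfrak{g}$, gives the coboundary $\ga\mapsto \xi-\op{Ad}_{\rho(\ga)}\xi$, so the tangent space to the $G$-orbit is $B^1(\pi_1(X),\op{Ad}\rho)$. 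Combining this with the previous paragraph yields a canonical isomorphism
$$ T_{[\rho]}\moir (X) \;\simeq\; Z^1(\pi_1(X),\op{Ad}\rho)/B^1(\pi_1(X),\op{Ad}\rho) \;=\; H^1(\pi_1(X),\op{Ad}\rho). $$

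Finally, as recalled in the paragraph preceding the lemma, for any compact connected manifold $Y$ and any representation $\rho$ there is a natural isomorphism $H^1(\pi_1(Y),\op{Ad}\rho)\simeq H^1(Y,\op{Ad}\rho)$ between group cohomology and cohomology with local coefficients (valid in degrees $0$ and $1$ without any $K(\pi,1)$ assumption, since both sides compute the same thing via the $2$-skeleton). Composing with the identification above gives the desired natural isomorphism $T_{[\rho]}\moir(X)\simeq H^1(X,\op{Ad}\rho)$. The only delicate point is to check that the slice/quotient argument yields a genuine manifold structure compatible with the group-cohomology description, but this has already been established by Proposition \ref{prop:smooth} together with the appendix, so nothing essentially new is required here.
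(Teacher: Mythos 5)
There is a genuine gap at the central step of your argument: the assertion that $T_\rho \repir(\pi_1(X)) = Z^1(\pi_1(X),\op{Ad}\rho)$. The first-order computation you describe shows only one inclusion: differentiating a smooth family of homomorphisms produces a cocycle, so $T_\rho\repir(\pi_1(X)) \subseteq Z^1(\pi_1(X),\op{Ad}\rho)$. The reverse inclusion --- that every cocycle is actually tangent to a curve of genuine representations --- is not automatic for a finitely presented group: $Z^1$ is only the Zariski tangent space of the variety cut out by the relations, and deformations in the direction of a cocycle can be obstructed at higher order, so the honest tangent space of the smooth locus may be a proper subspace. The group $\pi_1(X)$ here has nontrivial relations (the centrality of $t$, the surface relation, and the gluing relations coming from $\varphi_i(\partial D)$), so this is precisely the point that needs an argument. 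The appendix you invoke is careful on exactly this issue: it states only that $T_{[\rho]}\mo^{\op{s},0}(\pi)$ \emph{identifies with a subspace} of $H^1(\pi,\op{Ad}\rho)$, not with all of it. In effect you have assumed the conclusion of the lemma.

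The paper closes this gap by a different route. It starts from the inclusion $T_{[\rho]}\moir(X)\subseteq H^1(X,\op{Ad}\rho)$ furnished by the appendix, observes that the tangent map of the diffeomorphism $R_{u,v}:\moir(X,u,v)\to\moir(\Si,u)$ from Proposition \ref{prop:smooth} is the restriction of $H^1(X,\op{Ad}\rho)\to H^1(\Si,\op{Ad}\rho)$, and uses the known identification $T_{[\rho]}\moir(\Si,u)=\ker\bigl(H^1(\Si,\op{Ad}\rho)\to H^1(\partial\Si,\op{Ad}\rho)\bigr)$ together with the exactness of
$$ 0 \rightarrow H^1(X,\op{Ad}\rho) \rightarrow H^1(\Si,\op{Ad}\rho) \rightarrow H^1(\partial\Si,\op{Ad}\rho) \rightarrow 0 $$
from Theorem \ref{theo:homologie_Seifert}. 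Since $H^1(X,\op{Ad}\rho)$ injects into $H^1(\Si,\op{Ad}\rho)$ with image exactly that kernel, and the subspace $T_{[\rho]}\moir(X)$ already surjects onto it, a dimension count forces $T_{[\rho]}\moir(X)=H^1(X,\op{Ad}\rho)$. To repair your proof you would either need to reproduce this comparison with the surface moduli space, or give a direct unobstructedness argument for cocycles of $\pi_1(X)$; the bare deformation computation does not suffice.
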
 
\begin{proof} 
By appendix \ref{sec:representation-space}, $T_{[\rho]} \moir (X)$ is naturally identified with a subspace of $H^1 ( X, \op{Ad} \rho)$. 
Similarly, it is known that $T_{[\rho]} \mo ^0 ( \Si, u)$ gets identified to the kernel of the morphism $H^1 ( \Si, \op{Ad} \rho) \rightarrow H^1 ( \partial \Si, \op{Ad} \rho)$. 
Furthermore, we easily see that the tangent linear map to $R_{u,v}$ is the restriction of the morphism $ H^1 ( X, \op{Ad} \rho) \rightarrow H^1(\Si, \op{Ad} \rho)$. As we will see in Theorem \ref{theo:homologie_Seifert},  the following sequence is exact
$$ 0 \rightarrow H^1 ( X, \op{Ad} \rho) \rightarrow H^1 ( \Si, \op{Ad} \rho) \rightarrow H^1 ( \partial \Si, \op{Ad} \rho) \rightarrow 0 .$$
 This implies that $H^1 ( X, \op{Ad} \rho )=  T_{[\rho]} \moir (X)$.
\end{proof}

\section{The homology groups $H_1 ( X, \op{Ad} \rho)$ and $H_1 ( \Si, \op{Ad} \rho)$} \label{sec:homology-groups}

As in the previous section, for any compact connected topological space $Y$ and representation $\rho : \pi_1 ( Y) \rightarrow G$, we consider the flat vector bundle $\op{Ad} \rho \rightarrow Y$. We are interested in corresponding homology groups $H_{\bullet} ( Y, \op{Ad} \rho)$ for $Y= X$ or $\Si$. As a first remark, if $\rho$ is irreducible, then by Appendix \ref{sec:representation-space}, $H^0 ( Y, \op{Ad} \rho) = H^0 ( \pi_1(Y), \op{Ad} \rho)= 0$ because the center of $G$ is finite. By duality, $H_0 ( Y, \op{Ad} \rho) =0$.

%% \begin{lem} If $\rho$ is irreducible then $H_0 ( Y, \op{Ad} \rho ) = 0$. 
%% \end{lem}

%% \begin{proof} Let $\mathfrak{g}$ be the Lie algebra of $G$. Then 
%% $$ H_0 ( Y , \op{Ad} \rho ) = \bigl\{ \xi \in \mathfrak{g} ; \quad \op{Ad}_{\rho (x) } \xi =\xi, \; \forall x \in \pi_1 ( Y ) \bigr\} .$$
%% Let $\xi \in H_0 ( Y, \op{Ad} \rho )$. Then the image of $\rho$ is included in the stabiliser $H$ of $\xi$. So the centraliser $Z_G(H)$ of $H$ is contained in the centraliser of $\rho ( \pi_1 (Y))$.  $\rho$ being irreducible, we deduce that the Lie algebra of $Z_G(H)$ is trivial. But this Lie algebra is $\{ \eta ; \; \op{Ad}_g \eta = \eta, \; \forall g \in H \}$ and contains $\xi$. So $\xi =0$.  
%% \end{proof} 

Consider the surface $\Si$ and an irreducible representation $\rho : \pi_1 ( \Si ) \rightarrow  G$. For any boundary component $C_i$, choose a base point $x_i \in C_i$ and let $V_i = \ker ( \op{hol}_i - \op{id})$ where $\op{hol}_i :  \op{Ad} \rho |_{x_i} \rightarrow  \op{Ad} \rho |_{x_i}$ is the holonomy of $C_i$ in $ \op{Ad} \rho $. We have two isomorphisms
$$ H_0 ( C_i ,   \op{Ad} \rho ) \simeq V_i, \quad H_1 ( C_i ,  \op{Ad} \rho) \simeq V_i. $$
sending $u\in V_i$ into $ [x_i ] \otimes u$ and $[C_i ] \otimes u$ respectively. 

\begin{lem} \label{lem:Sigma}
We have $H_0 ( \Si, \op{Ad} \rho ) = H_2 ( \Si , \op{Ad} \rho ) =0$. Furthermore
the natural map $f: H_1 ( \partial \Si,  \op{Ad} \rho ) \rightarrow H_1 ( \Si , \op{Ad} \rho )$ is injective.
\end{lem}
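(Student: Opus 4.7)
The plan is to dispatch the three assertions separately, each by a standard ingredient (coinvariants, homotopy type of a surface with boundary, and Poincaré--Lefschetz duality with local coefficients).

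For the first vanishing, note that $H_0(\Sigma, \operatorname{Ad}\rho)$ is the space of coinvariants $\mathfrak{g}_{\pi_1(\Sigma)}$. The fixed invariant scalar product on $\mathfrak{g}$ identifies $\operatorname{Ad}\rho$ with its dual local system, so coinvariants are isomorphic to invariants $\mathfrak{g}^{\pi_1(\Sigma)}$. These invariants are the Lie algebra of the centraliser of $\rho(\pi_1(\Sigma))$ in $G$; by irreducibility of $\rho$ this centraliser is $Z(G)$, which is finite, so $\mathfrak{g}^{\pi_1(\Sigma)}=0$. (This is exactly the argument already given in the introductory paragraph of the section, applied with $Y=\Sigma$.)

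For the second vanishing, I would use the homotopy type of $\Sigma$: since $n \geqslant 1$, the surface $\Sigma$ has nonempty boundary and deformation retracts onto a one-dimensional CW complex (a wedge of $2g+n-1$ circles). Homology with any local coefficient system in degree $\geqslant 2$ of a one-dimensional complex is zero, so in particular $H_2(\Sigma,\operatorname{Ad}\rho)=0$.

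For the injectivity of $f$, I would use the long exact sequence of the pair $(\Sigma,\partial\Sigma)$ with coefficients in $\operatorname{Ad}\rho$:
$$ \cdots \to H_2(\Sigma,\partial\Sigma;\operatorname{Ad}\rho) \to H_1(\partial\Sigma,\operatorname{Ad}\rho) \xrightarrow{f} H_1(\Sigma,\operatorname{Ad}\rho) \to \cdots $$
It suffices to show that the first term vanishes. Since $\Sigma$ is compact, oriented, two-dimensional with boundary, and $\operatorname{Ad}\rho$ is self-dual as a local system (via the invariant scalar product on $\mathfrak{g}$), Poincaré--Lefschetz duality gives
$$ H_2(\Sigma,\partial\Sigma;\operatorname{Ad}\rho) \;\cong\; H^{0}(\Sigma,\operatorname{Ad}\rho), $$
and the right-hand side equals $\mathfrak{g}^{\pi_1(\Sigma)}=0$ by the first part of the argument.

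There is no serious obstacle here; the only point one needs to be careful about is the twisted version of Poincaré--Lefschetz duality, which is standard for compact oriented manifolds with boundary once a self-dual structure on the coefficient system is fixed. All three statements then collapse to the single fact that $\rho$ irreducible with $Z(G)$ finite forces $\mathfrak{g}^{\pi_1(\Sigma)}=0$.
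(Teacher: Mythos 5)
Your proof is correct and follows essentially the same route as the paper: the key points (vanishing of $H_0(\Si,\op{Ad}\rho)$ from irreducibility and finiteness of $Z(G)$, vanishing of $H_2(\Si,\partial\Si,\op{Ad}\rho)$ by twisted Poincar\'e--Lefschetz duality, and the long exact sequence of the pair to get injectivity of $f$) are exactly those of the paper. The only minor variation is that you obtain $H_2(\Si,\op{Ad}\rho)=0$ from the one-dimensional homotopy type of $\Si$, whereas the paper deduces it by duality from $H_0(\Si,\partial\Si,\op{Ad}\rho)=0$; both are valid.
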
 

\begin{proof} 
$\Si $ being connected with a non empty boundary, $H_0 ( \Si, \partial \Si ,  \op{Ad} \rho ) = 0$, so by Poincar{\'e} duality, $H_2 ( \Si ,  \op{Ad} \rho) = 0 $. Since $\rho $ is irreducible, $H_0 ( \Si , \op{Ad} \rho ) =0$ and by Poincar{\'e} duality, $H_2 ( \Si, \partial \Si ,  \op{Ad} \rho ) =0$. Writing the long exact sequence associated to the pair $(\Si, \partial \Si)$, we deduce that $f$ is  one-to-one.
\end{proof} 

Consider now the Seifert manifold $X$ and an irreducible representation $\rho  : \pi_1 (X) \rightarrow G $. Since $\Si$ is a submanifold of $X$, we have a natural morphism  
$$ g: H_1 ( \Si, \op{Ad} \rho ) \rightarrow H_1 ( X, \op{Ad} \rho)$$
By lemma \ref{prop:irred}, the restriction of $\rho$ to $S^1$ is central. So 
the restriction of the bundle $\op{Ad} \rho$ to $\Si \times S^1$ is isomorphic to $\op{Ad} \rho |_X \boxtimes \R_{S^1}$ \footnote{If $E \rightarrow B$ and $E' \rightarrow B'$ are two vector bundles, we denote by $E \boxtimes E'$ the vector bundle $(\pi ^* E) \otimes ((\pi')^* E' )$ where $ \pi$ and $\pi'$ are the projection from $B \times B'$ onto $B$ and $B'$.}. Here we denote by $\R_{S^1}$ the trivial vector bundle over $S^1$ with fiber $\R$. This allows to define a second application 
$$ h : H_1 ( \Si, \op{Ad} \rho) \rightarrow H_2 ( X, \op{Ad} \rho)$$
which sends $\al \in H_1 ( \Si, \op{Ad} \rho)$ into the image of $ \al \boxtimes [S^1] \in H_2 ( \Sigma \times S^1, \op{Ad} \rho )$ by the natural morphism $H_2  ( \Sigma \times S^1, \op{Ad} \rho ) \rightarrow  H_2 ( X, \op{Ad} \rho)$.
 
\begin{theo} \label{theo:homologie_Seifert}
We have $H_0 ( X, \op{Ad} \rho) = H_3 ( X, \op{Ad} \rho) = 0$. Furthermore the following sequences are exact:
$$ 0 \rightarrow H_1 ( \partial \Si , \op{Ad} \rho) \xrightarrow{f} H_1 ( \Si, \op{Ad} \rho ) \xrightarrow{g} H_1 ( X, \op{Ad} \rho) \rightarrow 0, $$
$$ 0 \rightarrow H_1 ( \partial \Si , \op{Ad} \rho ) \xrightarrow{f} H_1 ( \Si,  \op{Ad} \rho) \xrightarrow{h} H_2 ( X, \op{Ad} \rho ) \rightarrow 0 .$$
\end{theo}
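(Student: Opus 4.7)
The strategy is to apply Mayer--Vietoris with coefficients in $\op{Ad}\rho$ to the decomposition $X = A \cup B$, with $A$ an open thickening of $\Si \times S^1$ and $B = \bigsqcup_{i=1}^n D_i \times S^1$, so that $A \cap B$ retracts onto $\partial \Si \times S^1$. The essential input is Proposition \ref{prop:irred}: since $\rho(S^1)$ is central, $\op{Ad}\rho$ restricted to any submanifold of the form $Y \times S^1 \subset \Si \times S^1$ splits as the box product $\op{Ad}\rho|_Y \boxtimes \R_{S^1}$, so K\"unneth applies.

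First I would compute every term. Write $P = H_1(\Si, \op{Ad}\rho)$. Lemma \ref{lem:Sigma} and K\"unneth yield $H_k(\Si \times S^1, \op{Ad}\rho) = 0$ for $k \in \{0,3\}$ and $= P$ for $k \in \{1,2\}$; and $H_k(C_i \times S^1, \op{Ad}\rho)$ is $V_i, V_i \oplus V_i, V_i$ for $k = 0, 1, 2$. For each solid torus $D_i \times S^1$, the bundle $\op{Ad}\rho$ pulls back from $S^1$ with monodromy $\op{Ad}_g$, where $g \in G$ is $\rho$ applied to a generator of $\pi_1(D_i \times S^1)$. Expressing that generator in the $([C_i], [S^1])$ basis via $\varphi_i^{-1}$, one finds $g = \rho(C_i)^a \rho(S^1)^b$ with $aq_i + bp_i = \pm 1$, so $\gcd(a, p_i) = 1$. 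Centrality of $\rho(S^1)$ gives $\op{Ad}_g = \op{Ad}_{\rho(C_i)}^a$ and $\op{Ad}_{\rho(C_i)}^{p_i} = \op{id}$ (using $\rho(C_i)^{p_i} = \rho(S^1)^{q_i}$), whence $\ker(\op{Ad}_g - \op{id}) = \ker(\op{Ad}_{\rho(C_i)} - \op{id}) = V_i$. Therefore $H_k(D_i \times S^1, \op{Ad}\rho) = V_i$ for $k \in \{0,1\}$ and vanishes otherwise.

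Next I would identify the Mayer--Vietoris arrows, writing $V = \bigoplus V_i$. The $H_0$ arrow $V \to 0 \oplus V$ is an isomorphism onto the $V$ factor. The $H_1$ arrow $V \oplus V \to P \oplus V$ takes the form $(u, u') \mapsto (f(u), -u')$: the $H_0(C_i) \otimes H_1(S^1)$ summand dies under $i_{A*}$ because $H_0(\Si, \op{Ad}\rho) = 0$, and maps identically onto $H_1(D_i \times S^1) = V_i$ under $i_{B*}$ by contracting $\{x_i\} \times S^1$ to the core of the solid torus; the $H_1(C_i) \otimes H_0(S^1)$ summand produces $f$ under $i_{A*}$ and dies under $i_{B*}$ because $\partial D$ bounds in $D$. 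The $H_2$ arrow $V \to P$ is simply $f$.

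The exact sequences fall out by a chase. Since the $H_0$ arrow is injective, the connecting map $H_1(X) \to V$ vanishes, so $P \oplus V \twoheadrightarrow H_1(X)$ has kernel $f(V) \oplus V$; this identifies $H_1(X)$ with $P/f(V)$, and since the $V$ summand dies, the induced quotient map from $P$ coincides with $g$, yielding the first short exact sequence. Injectivity of $f$ forces the $H_1$ arrow to be injective, so the connecting map $H_2(X) \to V \oplus V$ vanishes and $P \twoheadrightarrow H_2(X)$ has kernel $f(V)$; the quotient map is $h$ by its very definition, yielding the second short exact sequence. For $H_3(X)$, the sequence gives $H_3(X) \hookrightarrow V \xrightarrow{f} P$, hence $H_3(X) = \ker f = 0$; and $H_0(X) = 0$ by irreducibility and finiteness of the center of $G$. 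The one real subtlety is the identification $\ker(\op{Ad}_g - \op{id}) = V_i$, which links the algebraic relation $\rho(C_i)^{p_i} = \rho(S^1)^{q_i}$ to the arithmetic of the gluing.
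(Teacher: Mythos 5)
Your overall strategy is exactly the paper's: Mayer--Vietoris for the Seifert decomposition, K\"unneth applied via the centrality of $\rho(S^1)$, and Lemma \ref{lem:Sigma} to kill $H_0$ and $H_2$ of $\Si$. Your computation of $H_\bullet(D_i\times S^1,\op{Ad}\rho)$ through the arithmetic $\gcd(a,p_i)=1$ and $\op{Ad}_{\rho(C_i)}^{p_i}=\op{id}$ is a correct variant of the paper's argument, which instead uses that $\op{hol}_{\varphi_i(\partial D)}$ is trivial and intersects fixed subspaces; both give $V_i$ in degrees $0$ and $1$.

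However, your identification of the $H_1$ Mayer--Vietoris arrow on the solid-torus side is wrong, and both justifications you give for it are false. The splitting $H_1(C_i\times S^1,\op{Ad}\rho)\simeq V_i\oplus V_i$ is taken with respect to the basis $([C_i],[S^1])$, i.e.\ the product structure inherited from $\Si\times S^1$; but the curve that bounds a disc in the $i$-th solid torus is $\varphi_i(\partial D)$, whose class is $-p_i[C_i]+q_i[S^1]$, \emph{not} $[C_i]$, so $[C_i]$ does not die. Likewise $\{x_i\}\times S^1$ is not isotopic to the core of the solid torus unless $p_i=1$. Inverting the gluing matrix, one finds that $[C_i]$ and $[S^1]$ map to $q_i$ and $p_i$ times the core class respectively, so the correct arrow is $(u,u')\mapsto q_iu+p_iu'$ into $V_i$, not $(u,u')\mapsto -u'$. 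For the present theorem you get away with it: both your map and the correct one are injective with the same image $f(V)\oplus V$ (precisely because $p_i\neq 0$), so the long exact sequence still breaks into the same three short exact sequences and your diagram chase goes through unchanged. But the mistake is not harmless in context: the determinant of this very block, $\det p=\prod_i p_i^{\dim V_i}$, is exactly the source of the factor $p_i^{(\dim G-\dim u_i)/2}$ in Theorem \ref{theo:main-result}, so the correct identification is indispensable once one computes the torsion in Theorem \ref{theo:torsion}.
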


\begin{proof} Since $\rho$ is irreducible, $H_ 0 ( X, F) = 0 $. By Poincar{\'e} duality, $H_3 ( X, F) = 0$. 
To prove that the sequences are exact, we will consider the Mayer-Vietoris long exact sequence associated to the decomposition (\ref{eq:Seifert_dec}) of $X$.

Since the restriction of $\op{Ad} \rho$ to $\Si \times S^1$ is isomorphic to $\op{Ad} \rho |_{\Si}  \boxtimes \R_{S^1}$, we can compute by applying the K{\"u}nneth theorem to the maps
\begin{gather} \label{eq:map}
H_j ( \partial \Si \times S^1, \op{Ad} \rho ) \rightarrow H_j ( \Si \times S^1, \op{Ad} \rho ), \qquad j =3,2,1,0. 
\end{gather} 
We have that $H_j ( S^1, \R) = \R$ for $j=0,1$ and by Lemma \ref{lem:Sigma}, $H_j (\Si,  \op{Ad} \rho) =0$  for $j=0,2$.  We deduce that 
$$H_3 ( \Si \times S^1, \op{Ad} \rho ) = H_0 ( \Si \times S^1, \op{Ad} \rho) =0$$
and $H_0 (\partial \Si \times S^1, \op{Ad} \rho ) \simeq H_0 ( \partial \Si, \op{Ad} \rho) $, which determines (\ref{eq:map}) for $j=0$ and $3$. For $j=2$,  the map (\ref{eq:map}) identifies with the map 
$f: H_1 (\partial \Si ,  \op{Ad} \rho ) \rightarrow H_1 ( \Si,  \op{Ad} \rho  )$ and for $j =1$ with
$$ f \oplus 0: H_1 ( \partial \Si ,  \op{Ad} \rho ) \oplus H_0 ( \partial \Si,  \op{Ad} \rho  ) \rightarrow H_1 ( \Si ,  \op{Ad} \rho  )  ,$$
because $H_0( \Si , \op{Ad} \rho) =0$. 
Applying again the K{\"u}nneth theorem, the maps $H_j ( \Si \times S^1,   \op{Ad} \rho  ) \rightarrow H_j ( X,   \op{Ad} \rho  )$ identify with $g$ and $h$ for $j=1$ and $2$ respectively.  

It remains to compute the maps $H_j ( C_i \times S^1,  \op{Ad} \rho ) \rightarrow H_j ( D \times S^1, \tilde{\varphi}_i ^*  \op{Ad} \rho  )$. Here we denote by $\tilde{\varphi}_i$ the embedding of $D\times S^1$ into $X$ extending $\varphi_i$. 
Since $D$ is contractible, $H_j ( D \times S^1, \tilde{\varphi}_i ^*  \op{Ad} \rho  ) = 0$ for $j =2,3$. Let us determine the holonomy of $S^1$ in the bundle $\tilde{\varphi}_i ^*  \op{Ad} \rho  \rightarrow D\times S^1$. It is equal to the holonomy of $\varphi _i ( S^1)$ in $  \op{Ad} \rho  \rightarrow C_i \times S^1$.  For any loop $\ga$ of $ C_i \times S^1 $ based at $(x_i, 0)$, we denote by $\op{hol}_\ga :  \op{Ad} \rho |_{x_i} \rightarrow  \op{Ad} \rho |_{x_i} $ the holonomy of $\ga$ in $ \op{Ad} \rho  \rightarrow C_i \times S^1$. Since $D$ is contractible, $\op{hol } _{ \varphi_i (\partial D) }$ is trivial, so that   
\begin{xalignat*}{3} 
 \ker  ( \op{hol} _{\varphi_i ( S^1)} - \op{ id} ) =  & \ker  ( \op{hol} _{\varphi_i (\partial D)} - \op{ id} )  \cap  \ker  ( \op{hol} _{\varphi_i ( S^1)} - \op{ id} )  \\
= & \ker  ( \op{hol} _{C_i } - \op{ id} )  \cap  \ker  ( \op{hol} _{S^1} - \op{ id} ) \\
= & \ker  ( \op{hol} _{ C_i} - \op{ id} ) \\ = & V_i 
\end{xalignat*}
where we have used first that $\varphi_i$ is a diffeomorphism and second that $\op{hol}_{S^1}$ is trivial. 
We deduce that 
$$H_j ( D\times S^1,\tilde{\varphi}_i ^*  \op{Ad} \rho  ) \simeq V_i$$ for $j =0$ or $1$. As above, let us identify $H_1 ( C_i \times S^1,  \op{Ad} \rho  ) $ with $ H_1 ( C_i , \op{Ad} \rho ) \oplus H_0 (C_i ,  \op{Ad} \rho  ) = V_i \oplus V_i $. Then by Equation (\ref{eq:coef_chirurgie}), the map $H_1 ( C_i \times S^1,  \op{Ad} \rho ) \rightarrow H_1 (  D\times S^1,  \tilde{\varphi}_i ^*  \op{Ad} \rho )$  corresponds to 
$$ V_i \oplus V_ i \rightarrow V_i , \qquad ( u,v ) \rightarrow q_i u + p_i v .$$
Putting everything together and setting $V = \bigoplus V_i$, we obtain the following long exact sequence
\begin{gather*}
 0 \rightarrow V \xrightarrow{f} H_1 ( \Si ,  \op{Ad} \rho ) \xrightarrow{h} H_2 ( X,  \op{Ad} \rho ) \rightarrow V \oplus V \xrightarrow{\tilde{f} }  \\ H_1 ( \Si ,  \op{Ad} \rho  ) \oplus V  \xrightarrow{ [g, \tilde{g} ]} H_1 ( X,  \op{Ad} \rho ) \rightarrow V \xrightarrow{ \op{ id}} V \rightarrow 0 
\end{gather*} 
where $\tilde{g}: V \rightarrow H_1 ( X ,  \op{Ad} \rho )$ is unknown and $\tilde{f}$ is the map $\begin{pmatrix} f & 0 \\ q & p \end{pmatrix}$ with $q, p : V \rightarrow V$  the maps whose restriction to $V_i$ are the multiplications by $q_i$, $p_i$ respectively. 

We recover the fact that $f$ is injective. Since $f$ is injective and the $p_i$ don't vanish, $\tilde{f}$ is injective too. Furthermore the identity of $V$ is certainly injective. So the Mayer-Vietoris long exact sequences breaks into three exact sequences:
\begin{gather} \label{eq:2} 
  0 \rightarrow V \xrightarrow{f} H_1 ( \Si ,  \op{Ad} \rho ) \xrightarrow{h} H_2 ( X,   \op{Ad} \rho ) \rightarrow  0\\ 
\label{eq:3} 0 \rightarrow V \oplus V \xrightarrow{\tilde{f} }   H_1 ( \Si ,  \op{Ad} \rho ) \oplus V  \xrightarrow{ [g, \tilde{g} ]} H_1 ( X,   \op{Ad} \rho ) \rightarrow  0 \\
\label{eq:7} 0 \rightarrow V \xrightarrow{\op{id}} V \rightarrow 0
\end{gather}
(\ref{eq:2}) is the second exact sequence in the statement of the theorem. 
Finally, it is an easy exercise to deduce from the exact sequence (\ref{eq:3}) that the first sequence in the statement of the theorem is exact. 
\end{proof} 

\section{Poincar\'e duality on $X$ and $\Si$}  \label{sec:poincare-duality}

Choose an invariant scalar product on the Lie algebra of $G$. For any topological space $Y$ and representation $\rho$ of $\pi_1 (Y) $ in $G$, the flat vector bundle $\op{Ad} \rho$ inherits a flat metric. This allows to define a cup product $H^k ( Y, Z, \op{Ad} \rho) \times H^\ell ( Y, Z,\op{Ad} \rho ) \rightarrow H^{k + \ell} ( Y,Z,  \R )$ for any closed subspace $Z$ of $Y$. We will us these products for $X$ and $\Sigma$. 

Consider an irreducible representation $\rho$ of $\pi_1(\Si)$ in $G$. We have a bilinear map 
\begin{gather} \label{eq:pairing}
H^1 ( \Si , \partial \Si ,  \op{Ad} \rho ) \times H^{1} ( \Si ,   \op{Ad} \rho  ) \rightarrow \R, \quad (\al, \be) \rightarrow \al \cdot \be
\end{gather}
sending $( \al, \be)$ to the evaluation of the cup product $\al \cup \be$ on the fundamental class of ($\Si$, $\partial \Si$). 
Consider the following portion of the long exact sequence associated to the pair $( \Si, \partial \Si )$
$$ ... \rightarrow H^{1} ( \Si , \partial \Si ,  \op{Ad} \rho ) \xrightarrow{\pi} H^1 ( \Si ,  \op{Ad} \rho ) \xrightarrow{f^*} H^1 ( \partial \Si ,  \op{Ad} \rho ) \rightarrow ... $$
and introduce the space $K := \ker  f^* =  \op{Im}  \pi \subset H^1 ( \Si ,  \op{Ad} \rho )$.
For any $\al$, $\be \in K$, we set
\begin{gather} \label{eq:defOm}
 \Om ( \al  , \be ) :=  \tilde{\al} \cdot \be 
\end{gather}
where $\tilde{\al} $ is any element of $H^1( \Si, \partial \Si,  \op{Ad} \rho )$ such that $\pi ( \tilde {\al} ) = \al$.

\begin{lem} \label{lem:symp_Structure}
The bilinear map $\Om$ is well-defined, antisymmetric and non degenerate.
\end{lem}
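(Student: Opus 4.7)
The plan is to verify the three properties in turn, using the long exact sequence of the pair $(\Si, \partial \Si)$ combined with Poincar\'e--Lefschetz duality on $\Si$. For well-definedness, I would observe that if $\tilde{\al}$ and $\tilde{\al}'$ are two lifts of $\al$, their difference lies in $\ker \pi$, which by exactness equals the image of the connecting map $\delta : H^0(\partial \Si, \op{Ad}\rho) \to H^1(\Si, \partial\Si, \op{Ad}\rho)$. Writing $\tilde\al' - \tilde\al = \delta \ga$ and invoking the standard identity $\delta \ga \cup \be = \pm \delta(\ga \cup f^* \be)$ relating the cup product with the connecting homomorphism, the assumption $\be \in K = \ker f^*$ forces $\delta \ga \cup \be = 0$ in $H^2(\Si, \partial\Si, \R)$. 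Evaluating on the fundamental class $[\Si, \partial\Si]$ then shows that $\Om(\al, \be)$ is independent of the chosen lift.

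For antisymmetry, I would combine two observations. At the cochain level, the cup product $\tilde\al \cup \tilde\be$ of two relative classes, viewed in $H^2(\Si, \partial\Si, \R)$, coincides via the canonical forgetful map on one factor with $\tilde\al \cup \be$; symmetrically, $\tilde\be \cup \tilde\al$ represents $\tilde\be \cup \al$. Graded commutativity of the cup product, together with the symmetry of the chosen invariant scalar product on $\mathfrak{g}$ used to contract the Lie algebra coefficients, gives $\tilde\al \cup \tilde\be = -\tilde\be \cup \tilde\al$, and hence $\Om(\al, \be) = -\Om(\be, \al)$.

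For non-degeneracy, Poincar\'e--Lefschetz duality combined with the invariant scalar product yields a non-degenerate bilinear pairing on $H^1(\Si, \partial\Si, \op{Ad}\rho) \times H^1(\Si, \op{Ad}\rho)$, which is exactly the pairing (\ref{eq:pairing}). Suppose $\al \in K$ pairs to zero with every $\be \in K$ via $\Om$, and fix any lift $\tilde\al$; then $\tilde\al$ annihilates $K = \ker f^*$ under (\ref{eq:pairing}). The main technical input, which I expect to be the most delicate point, is that under Poincar\'e--Lefschetz duality on $\Si$ and Poincar\'e duality on the closed $1$-manifold $\partial\Si$, the transpose of $f^*$ is (up to sign) the connecting map $\delta$; this is the standard naturality of Poincar\'e duality for manifolds with boundary, and dualizing the long exact sequence of $(\Si, \partial\Si)$ it identifies the annihilator of $\ker f^*$ in $H^1(\Si, \partial\Si, \op{Ad}\rho)$ with $\op{Im}\, \delta = \ker \pi$. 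Hence $\tilde\al \in \ker \pi$, so $\al = \pi(\tilde\al) = 0$.
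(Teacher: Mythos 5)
Your proof is correct and follows essentially the same route as the paper: both antisymmetry and well-definedness come down to the graded commutativity of the relative cup product (the paper deduces well-definedness directly from the identity $\tilde\al \cdot \pi(\tilde\be) + \tilde\be \cdot \pi(\tilde\al) = 0$, whereas you give a redundant but valid alternative via the derivation property of the connecting homomorphism), and non-degeneracy rests on Poincar\'e--Lefschetz duality. Your identification of the annihilator of $K = \ker f^*$ with $\operatorname{Im}\delta = \ker\pi$ usefully spells out the step the paper compresses into ``the pairing (\ref{eq:pairing}) is non degenerate, so the same holds for $\Om$.''
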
 

So $(K, \Om)$ is a symplectic vector space. 
\begin{proof}
For any $ \tilde \al$, $ \tilde \be \in H^{1} ( \Si , \partial \Si , \op{Ad} \rho )$, 
$\tilde \al \cdot \pi (\tilde \be) +  \tilde \be \cdot \pi ( \tilde \al ) = 0$. Assuming that $ \pi ( \tilde \al ) = \al$ and $\pi ( \tilde \be ) = \beta$, we get that
$$ \Om ( \al , \be ) =  \tilde{\al} \cdot \be = - \tilde{\be} \cdot \al ,$$
 which proves that $\Om ( \al, \be)$ does not depend on the choice of $\tilde \al$ and that $\Om ( \al , \be ) = - \Om ( \be , \al)$. By Poincar{\'e} duality, the pairing (\ref{eq:pairing}) is non degenerate, so the same holds for $\Om$. 
\end{proof}

Consider now an  irreducible representation $\rho$ of $\pi_1(X)$ in $G$. By Poincar\'e duality, we have a nondegenerate pairing
\begin{gather} \label{eq:poincareX}
 H^1 ( X, \op{Ad} \rho) \times H^2 ( X,  \op{Ad} \rho) \rightarrow \R 
\end{gather}
sending $(\al, \be)$ to the evaluation of $\al \cup \be \in H^3(X)$ on the fundamental class. 
By Theorem \ref{theo:homologie_Seifert}, 
the maps $g^*$ and $h^*$ induce isomorphisms from $H^1 ( X,  \op{Ad} \rho)$ and $H^{2}(X,  \op{Ad} \rho)$ to $K$. 

\begin{theo} \label{theo:symplectomorphism}
For any $\al \in H^1 ( X, \op{Ad} \rho)$ and $\be \in H^{2}(X, \op{Ad} \rho)$, we have
$$  \al \cdot \be = \Om ( g^* \al , h^* \be).
$$
\end{theo}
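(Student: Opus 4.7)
The plan is to push both sides to the cross-section $\Si \times S^1 \subset X$ and then decompose using the K\"unneth formula on $(\Si, \partial\Si) \times S^1$.

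First I would lift $\be$ to a relative class. Since each filling piece $D \times S^1$ has the homotopy type of a circle, $H^2(D \times S^1, \op{Ad} \rho) = 0$, so the restriction of $\be$ to $\partial \Si \times S^1$ (which factors through $D\times S^1$) vanishes. Consequently $i^* \be \in H^2(\Si \times S^1, \op{Ad} \rho)$ admits a lift $\be' \in H^2(\Si \times S^1, \partial \Si \times S^1, \op{Ad} \rho)$, where $i : \Si\times S^1 \hookrightarrow X$ is the inclusion. Combined with the standard fact (from excision and $H_3(D \times S^1)=0$) that $[X]$ maps to $[\Si \times S^1, \partial \Si \times S^1]$ under the isomorphism $H_3(X) \simeq H_3(\Si \times S^1, \partial \Si \times S^1)$, this gives
$$ \al \cdot \be = \langle i^* \al \cup \be', [\Si \times S^1, \partial \Si \times S^1] \rangle. $$

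Second I apply the K\"unneth theorem in its absolute and relative forms on $(\Si, \partial \Si) \times S^1$. For irreducible $\rho$, Lemma \ref{lem:Sigma} and Poincar\'e--Lefschetz duality imply that $H^0$ and $H^2$ both vanish on $(\Si, \op{Ad} \rho)$ and on $(\Si, \partial \Si, \op{Ad} \rho)$. Hence only one K\"unneth summand survives in each case, giving $i^* \al = (g^* \al) \times 1$ in $H^1(\Si, \op{Ad} \rho) \otimes H^0(S^1, \R)$ and $\be' = \tilde{\ga} \times \om$ in $H^1(\Si, \partial \Si, \op{Ad} \rho) \otimes H^1(S^1, \R)$, where $\om$ generates $H^1(S^1, \R)$ and is dual to $[S^1]$, and $\tilde{\ga} \in H^1(\Si, \partial \Si, \op{Ad} \rho)$ is uniquely determined. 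A naturality argument with the K\"unneth isomorphism, using the very definition of $h$, shows that $\pi(\tilde{\ga}) = h^* \be$, so $\tilde{\ga}$ qualifies as the lift used in the definition of $\Om$.

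Third, the cup product behaves multiplicatively under K\"unneth: $i^* \al \cup \be' = ((g^*\al) \cup \tilde{\ga}) \times \om$. Evaluating on the product fundamental class yields $\langle (g^* \al) \cup \tilde{\ga}, [\Si, \partial \Si] \rangle$. Using graded commutativity, this equals $-\tilde{\ga} \cdot (g^*\al) = -\Om(h^*\be, g^*\al)$, and by antisymmetry of $\Om$ (Lemma \ref{lem:symp_Structure}) we conclude $\al \cdot \be = \Om(g^*\al, h^*\be)$.

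The main obstacle is bookkeeping: one must verify the orientation sign with which $[X]$ maps to $[\Si \times S^1, \partial \Si \times S^1]$ and the sign from graded commutativity in the K\"unneth cup product. No step is individually deep, but a single misplaced sign would invert the conclusion.
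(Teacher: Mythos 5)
Your proposal is correct and follows essentially the same route as the paper: localize the pairing to $\Si \times S^1$ using that $H^2$ vanishes on the solid tori, then use the K\"unneth decomposition (with $H^0(\Si, \op{Ad}\rho)=0$ from irreducibility) to reduce to the intersection pairing of $H^1(\Si,\partial\Si,\op{Ad}\rho)$ with $H^1(\Si,\op{Ad}\rho)$. The paper implements the same skeleton with de Rham representatives (a cutoff supported near the solid tori, then Stokes and Fubini) instead of your relative classes and excision, but the key inputs and the reduction are identical.
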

\begin{proof}
We will use de Rham cohomology. First let us prove that any element in $H^2 ( X, \op{Ad} \rho)$ has a representative $\be \in \Om^{2} ( X, \op{Ad} \rho)$ which vanishes identically on a neighborhood of $Z = \tilde{\varphi}_1 ( D \times S^1) \cup \ldots \cup 
\tilde{\varphi}_n ( D \times S^1)$ and such that 
\begin{gather} \label{eq:10} 
 \be = p_{\Si}^* \tilde{\be} \wedge p_{S^1}^* \tau + d \ga \quad \text{ on } \Si \times S^1
\end{gather}
where $p_{\Si}$ and $p_{S^1}$ are the projection from $\Si \times S^1$ onto $\Si$ and $S^1$ respectively, $\tilde \be \in \Om^1 ( \Si, \op{Ad} \rho)$ is closed, $\tau \in \Om^{1} (S^1)$ satisfies $\int_{S^1} \tau = 1$ and $\ga$ belongs to $\Om^1 ( \Si \times S^1, \op{Ad} \rho)$.
 
To check that, let us start with any representative $\be \in \Om^2 (X, \op{Ad} \rho)$. Since $H^2 (Z, \op{Ad} \rho) =0$, we have $\be = d \mu $ on $Z$. We can even assume that this holds on a neighborhood $U$ of $Z$. Let $\varphi \in \Ci ( X)$ with support contained in $U$ and identically equal to 1 on $Z$. Replacing $\be $ with $\be - d ( \varphi \mu)$, we have that $\be \equiv 0$ on a neighborhood of $Z$. By K{\"u}nneth theorem, $H^2 ( \Si \times S^1, \op{Ad} \rho) = H^1 ( \Si , \op{Ad} \rho) \otimes H^1 ( S^1, \R)$, which implies that $\be$ has the form (\ref{eq:10}) on $\Si \times S^1$.

Let us prove that any element in $H^1 ( X, \op{Ad} \rho)$ has a representative $\al \in \Om^1 ( X, \op{Ad} \rho)$ such that 
\begin{gather}  \label{eq:11}
 \al = p_{\Si } ^* \tilde \al  \qquad \text{on } \Si \times S^1
\end{gather}
where $\tilde \al \in \Om^1 ( \Si, \op{Ad} \rho)$ is closed and vanishes identically on $\partial \Si$. 

To check that, we start with any representative $\al \in \Om^1 ( X, \op{Ad} \rho)$. By K{\"u}nneth theorem, $H^{1} ( \Si \times S^1, \op{Ad} \rho) = H^1 ( \Si, \op{Ad} \rho) $ so that we have on $\Si \times S^1$ the equality $ \al = p_{\Si} ^* \tilde \al + d\ga $ with $\tilde \al \in \Om ^1 ( \Si, \op{Ad} \rho)$ closed and $\ga \in \Om^0 ( \Si \times S^1, \op{Ad} \rho)$. Observe that $[\tilde \al ] = g^* [\al]$, so by Theorem \ref{theo:homologie_Seifert}, $f^* [\tilde \al ] =0$. Thus adding to $\tilde \al$ an exact form (which modifies $\ga$), the restriction of $\tilde \al$ to $\partial \Si$ vanishes.  Finally, extending $\ga$ to $X$, and replacing $\al $ by $\al - d \ga$, we obtain Equation (\ref{eq:11}).

Now consider $\al$ and $\be $ as above. Then 
$$ [\al ] \cdot [\be] = \int_X \al \wedge \be = \int_{\Si \times S^1} \al \wedge \be$$
because $\be$ vanishes identically on a neighborhood of $Z$. To evaluate this last integral, we replace $\al$ and $\be$ by their expressions (\ref{eq:11}), (\ref{eq:10}). By Stokes' theorem,
$$ \int_{\Si \times S^1} p_{\Si}^* \tilde \al \wedge d \ga = \int_{\partial \Si \times S^1} p_{\Si}^* \tilde \al \wedge \ga  = 0$$
because $\tilde \al$ vanishes identically on $\partial \Si$. By Fubini theorem and because $\int_{S^1} \tau = 1$,
$$ \int_{\Si \times S^1}  p_{\Si}^* \tilde \al \wedge p_{\Si}^* \tilde{\be} \wedge p_{S^1}^* \tau = \int_\Si \tilde \al \wedge \tilde \be$$
Since $\tilde \al$ vanishes on $\partial \Si$, it is the representative of a class in $H^{1} ( \Si , \partial \Si , \op{Ad} \rho)$. So this last integral is equal to $\Om ( [\tilde \al ], [\tilde \be])$. Furthermore $[\tilde \al ] = g^* [\al]$ and $[\tilde \be] = h ^* [\be]$, which concludes the proof.
\end{proof} 

\section{Torsion of $X$}  \label{sec:torsion}

Let $\rho$ be an  irreducible representation $\rho$ of $\pi_1(X)$ in $G$. Since $H_0 ( X, \op{Ad} \rho ) = H_3 ( X , \op{Ad} \rho ) =0$, the torsion of the flat euclidean vector bundle $\op{Ad} \rho$ is a non vanishing vector of the line  
$$ \op{det} H_\bullet ( X, \op{Ad} \rho) \simeq \bigl( \op{det} ( H_1 ( X, \op{Ad} \rho )) \bigr)^{-1} \otimes \op{det} ( H_2 ( X, \op{Ad} \rho)) $$
well-defined up to sign. In the appendix \ref{sec:reidemeister-torsion}, we recall its definition and the properties we will need to compute it. By Theorem \ref{theo:homologie_Seifert}, we have an isomorphism 
$$\psi : H_1 ( X, \op{Ad} \rho ) \rightarrow H_2 ( X, \op{Ad} \rho ) $$ 
sending $g( \be)$ into $h( \be)$ for any $\be \in H_1 ( \Si , \op{Ad} \rho)$. The determinant of $\psi$ belongs to $\det  H_\bullet ( X, \op{Ad} \rho)$.

Let $\Delta : \Co (G) \rightarrow \R$ be the function given by 
$$ \Delta ( u ) = \bigl| {\det} _{H_g} (\op{Ad}_g - \op{id} ) \bigr|^{1/2}  
$$   
where $g $ is any element in the conjugacy class $u$ and $H_g$ is the orthocomplement of $\ker ( \op{Ad}_g - \op{id})$.

\begin{theo} \label{theo:torsion}
For any irreducible representation $\rho$ of $\pi_1 (X)$ in $G$, the torsion of $\op{Ad} \rho \rightarrow X$ is given by  
\begin{gather} \label{eq:torsion} 
\tau ( \op{Ad} \rho ) =  \prod_{i=1}^{n} \frac{p_i^{\dim V_i}}{ \Delta ^2 ( \rho (C_i)^{r_i} ) }
\det \psi 
\end{gather}
where $r_i$ is any inverse of $q_i$ modulo $p_i$ and $V_i = \ker  (\op{Ad}_{\rho (C_i)} - \op{id} ) $. 
\end{theo}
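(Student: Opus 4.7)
The plan is to apply the multiplicativity of Reidemeister torsion under the Seifert gluing (\ref{eq:Seifert_dec}) already exploited in Theorem \ref{theo:homologie_Seifert}. Writing $X = (\Si \times S^1) \cup \bigsqcup_i (D \times S^1)$ with common boundary $N := \bigsqcup_i (C_i \times S^1)$, the gluing formula gives, up to canonical identifications of determinant lines,
\[
\tau(\op{Ad}\rho, X) \cdot \tau(\op{Ad}\rho, N) = \tau(\op{Ad}\rho, \Si \times S^1) \cdot \prod_{i=1}^{n} \tau(\tilde{\varphi}_i^{*}\op{Ad}\rho, D \times S^1) \cdot \tau(\mathcal{MV}),
\]
where $\tau(\mathcal{MV})$ is the torsion of the Mayer-Vietoris long exact sequence whose exactness is proved in Theorem \ref{theo:homologie_Seifert}. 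The proof amounts to evaluating each factor and assembling them into an element of $(\det H_1(X, \op{Ad}\rho))^{-1} \otimes \det H_2(X, \op{Ad}\rho)$.

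For the pieces of the form $Y \times S^1$ with $Y \in \{\Si, C_1, \ldots, C_n\}$, centrality of $\rho(S^1)$ (Proposition \ref{prop:irred}) splits the flat bundle as $\op{Ad}\rho|_Y \boxtimes \R_{S^1}$. The product formula for torsion, combined with $\chi(S^1) = 0$, reduces these torsions to tractable expressions driven by the homology of $Y$ with $\op{Ad}\rho$-coefficients. For each piece $D \times S^1$, contractibility of $D$ identifies the bundle with the pullback from a single circle. The orientation-reversing diffeomorphism $\varphi_i$ sends the second generator of $H_1(\partial D \times S^1)$ to a class of the form $r_i[C_i] + s_i[S^1]$ with $p_i s_i + q_i r_i = 1$ (so $r_i$ is inverse to $q_i$ modulo $p_i$), and by centrality of $\rho(S^1)$ the adjoint holonomy along that $S^1$-factor is $\op{Ad}_{\rho(C_i)^{r_i}}$. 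The torsion of a circle with flat bundle $(\mathfrak{g}, \op{Ad}_g)$ splits along $\mathfrak{g} = V_i \oplus H_g$: on $H_g$ the operator $\op{Ad}_g - \op{id}$ is invertible and contributes the numerical factor $|\det_{H_g}(\op{Ad}_g - \op{id})| = \Delta(\rho(C_i)^{r_i})^2$, while on $V_i$ both $H_0$ and $H_1$ canonically identify with $V_i$ and the torsion is trivial. This produces the $\Delta(\rho(C_i)^{r_i})^{-2}$ factors in (\ref{eq:torsion}).

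The remaining task is to compute $\tau(\mathcal{MV})$. Using the three short exact sequences (\ref{eq:2}), (\ref{eq:3}), (\ref{eq:7}) into which the long exact sequence breaks, the torsion factorises across them. Sequence (\ref{eq:7}) contributes trivially; in (\ref{eq:3}), the block-triangular map $\tilde{f} = \begin{pmatrix} f & 0 \\ q & p \end{pmatrix}$ has determinant $\det(f) \cdot \prod_i p_i^{\dim V_i}$, where $q$ and $p$ act as multiplication by $q_i, p_i$ on each $V_i$; and in (\ref{eq:2}) the map $f$ again appears and cancels the extraneous $\det(f)$. The residual isomorphism between $\det H_1(X, \op{Ad}\rho)$ and $\det H_2(X, \op{Ad}\rho)$ produced by the composition is, by construction, the determinant of $\psi = h \circ g^{-1}$. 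Thus $\tau(\mathcal{MV})$ contributes exactly the factor $\prod_i p_i^{\dim V_i}$ times $\det\psi$ in the final line.

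The main obstacle is the bookkeeping. The K\"unneth identifications introduce orientation choices on each $Y \times S^1$; the gluing formula and Mayer-Vietoris torsion each come with their own conventions for bases and signs; and the final identification $\det H_\bullet(X, \op{Ad}\rho) \cong (\det H_1)^{-1} \otimes \det H_2$ must be traced back through the snake-lemma construction of $\psi$. Two delicate points are (i) confirming that $\tau(\op{Ad}\rho, \Si \times S^1) / \tau(\op{Ad}\rho, N)$ leaves no residual $\tau(\Si, \op{Ad}\rho)$-contribution (this uses $\chi(S^1)=0$ as the exponent in the product formula) and (ii) verifying that the orientation-reversing character of $\varphi_i$ produces the unsigned $\prod p_i^{\dim V_i}$ rather than a signed variant. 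Both are dispatched by careful application of the Reidemeister torsion conventions recalled in the appendix.
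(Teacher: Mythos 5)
Your proposal takes essentially the same route as the paper's proof: the Mayer--Vietoris multiplicativity of torsion applied to the decomposition (\ref{eq:Seifert_dec}), triviality of the torsions of $\Si \times S^1$ and $C_i \times S^1$ via $\chi(S^1)=0$, the circle computation (with $p_i s_i + q_i r_i = 1$) producing the $\Delta^{2}(\rho(C_i)^{r_i})$ factors from the solid tori, and the evaluation of the Mayer--Vietoris torsion through the three short exact sequences (\ref{eq:2}), (\ref{eq:3}), (\ref{eq:7}), with the $f$-contributions cancelling and $\tilde{f}$ yielding $\det p = \prod_i p_i^{\dim V_i}$ times $\det\psi$. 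This matches the paper's argument, including the bookkeeping it flags as delicate.
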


Let us make a few remark on the left hand side of (\ref{eq:torsion}).
\begin{enumerate}
\item It follows from the relation (\ref{eq:coef_chirurgie}) and the fact that $\rho (S^1)$ is central by Lemma \ref{prop:irred}, that $(\op{Ad}_{\rho (C_i)} ) ^{p_i}$ is the identity. So the right hand side of (\ref{eq:torsion}) does not depend on the choice of $r_i$. 
%% \item Since $\op{Ad}_{x} $ belongs to $\op{SO} (\mathfrak{g})$, we can compute $\det ' (\op{Ad}_{x} - \op{id})$ as follows. Denote by $R_{\theta}$ the rotation of angle $\theta$ of $\R^2$. If $V =\R^{2 \ell+ n}$ and $g \in \op{SO}(V)$ is conjugated in $\op{O}(V)$ to $ R_{\theta_1} \oplus \ldots \oplus R_{\theta_{\ell} } \oplus \op{id}_{\R^{n}}$ where the $\te_{\ell}$' are all non null, then 
%% $${\det }' ( g - \op{id} ) = 4 ^\ell \prod_{i=1}^{\ell} \sin ^2 ( \theta_{i} / 2) $$
\item $V_i$ is the Lie algebra of the centralizer of $\rho (C_i)$ in $G$. So the dimension of $V_i$ is equal to $\dim G - \dim u_i$ where $u_i$ is the conjugacy class of $\rho (C_i)$. 
\end{enumerate}

\begin{proof} 
By the proof of Theorem \ref{theo:homologie_Seifert}, the Mayer-Vietoris long exact sequence breaks into three short exact sequences: (\ref{eq:2}), (\ref{eq:3}) and (\ref{eq:7}). 
Choose $\al \in \det V$ and $\be \in \bigwedge^{\dim H_1 ( \Si, \op{Ad} \rho) - \dim V}  H_1 ( \Si , \op{Ad} \rho)$ such that $ f( \al )   \wedge \be \in \det H_1 ( \Si, \op{Ad} \rho)$ does not vanish.
By (\ref{eq:2}), we have an isomorphism 
\begin{gather} \label{eq:4}
 \R \simeq  \det V \otimes \bigl( \det H_1 ( \Si, \op{Ad} \rho)\bigr)^{-1} \otimes  \det H_2 ( X, \op{Ad} \rho)
\end{gather} 
sending $1$ into $\al  \otimes \bigl( f(\al) \wedge \be \bigr)^{-1} \otimes h( \be)$. By (\ref{eq:3}), we have an isomorphism 
\begin{gather} \label{eq:5}
 \R \simeq ( \det V)^{-2} \otimes \bigl( \det H_1 ( \Si, \op{Ad} \rho) \otimes \det V \bigr)  \otimes  \bigl( \det H_1 (X, \op{Ad} \rho) \bigr)^{-1}
\end{gather} 
sending $1$ into $ \al ^{-2} \otimes \bigl( (f( \al) \wedge \be) \otimes ( \det p ) \al \bigr)  \otimes g( \be) ^{-1}$ where $p$ is the map introduced in the proof of Theorem \ref{theo:homologie_Seifert}. We easily compute that:
$$ \det p = \prod_{i=1}^{n} p_i^{ \dim V_i} $$
By (\ref{eq:7}), we have an isomorphism 
\begin{gather} \label{eq:8}
\R \simeq  \det V  \otimes \bigl( \det V \bigr)^{-1}
\end{gather} 
sending $1$ into $\al \otimes \al ^{-1}$.
Taking the tensor product of (\ref{eq:4}), (\ref{eq:5}) and (\ref{eq:8}), we get the isomorphism associated to the Mayer-Vietoris long exact sequence:
\begin{gather} \label{eq:9}
 \R \simeq  \bigl( \det  H_1 ( X, \op{Ad} \rho) \bigr)^{-1}  \otimes \det H_2 ( X, \op{Ad} \rho).
\end{gather}
It  sends $1$ into $ \bigl( \det p \bigr)  h ( \be) / g( \be) = \bigl( \det p \bigr)  \bigl( \det \psi \bigr) $.

Let us compute the torsion of the restrictions of $\op{Ad} \rho$ to $ C_i \times S^1$, $\Sigma \times S^1$ and $\tilde{\varphi}_i (D \times S^1)$ respectively. We will use the identifications made previously for the various cohomology groups. First, the torsion of $ \op{Ad} \rho \rightarrow C_i \times S^1$ is 
$$ 1 \in \R \simeq \det V_i \otimes \bigl( \det V_i \bigr)^{-1} \otimes \op{det} V_i \otimes \bigl( \det V_i \bigr)^{-1} .$$
Indeed, the bundle $\op{Ad} \rho|_{C_i \times S^1}$ is isomorphic to $\op{Ad} \rho|_{C_i} \boxtimes \R_{S^1}$. Furthermore, $\chi( C_i) = \chi (S^1) =0$. By property \ref{item:prod} of the appendix \ref{sec:reidemeister-torsion}, this implies that $\tau (  \op{Ad} \rho, {C_i \times S^1} ) = 1$. 

Second the torsion of $\op{Ad} \rho \rightarrow \Si \times S^1$ is 
$$ 1 \in \R \simeq \det H_1 ( \Si, \op{Ad} \rho) \otimes \bigl( \det H_1 ( \Si, \op{Ad} \rho) \bigr)^{-1} $$ 
Indeed, the bundle $\op{Ad} \rho|_{\Si \times S^1}$ is isomorphic to $\op{Ad} \rho|_{\Si} \boxtimes \R_{S^1}$. Since $\chi ( S^1) =0$, we deduce from properties \ref{item:prod} and \ref{item:cercle}  of the appendix \ref{sec:reidemeister-torsion} that $\tau ( \op{Ad} \rho, {\Si \times S^1} ) = \tau ( \R_{S^1}) ^{\chi (\op{Ad} \rho|_\Si)} = 1$.  

Third the torsion of $\tilde{\varphi}_i ^* \op{Ad} \rho \rightarrow  D \times S^1$ belongs to $\R \simeq \det V_i \otimes \bigl( \det V_i \bigr)^{-1}$. Since $\varphi_i$ is a diffeomorphism from $\partial D \times S^1$ to $C_i  \times S^1$ reversing the orientation and satisfying (\ref{eq:coef_chirurgie}), we have the following relation in $H_1 (C_i \times S^1)$
$$ \varphi_i ( [S^1] ) = r_i [C_i] + s_i [S^1] $$
where $r_i$, $s_i$ are such that such that $ p_i s_i + q_i r_i = 1$. Since $\rho ( S^1)$ is central, $\op{Ad}_{\rho (S^1)} $ is the identity, so $$\op{Ad }_{\rho ( \varphi_i ( S^1 ))} = \op{Ad}_{\rho ( C_i)} ^{r_i}.$$ 
By Property \ref{item:cercle}  of the appendix \ref{sec:reidemeister-torsion}, we conclude that the torsion of $\tilde{\varphi}_i ^* \op{Ad} \rho$ is equal to the square of $\Delta ( \rho (C_i) ^{r_i} ) $.

By Property \ref{item:MV}  of the appendix \ref{sec:reidemeister-torsion}, we deduce from the previous computations that 
$$ \bigl( \det p \bigr) \bigl( \det \psi \bigr) = \tau ( X, \op{Ad} \rho ) \prod_{i=1}^{n}   \Delta^2 ( \rho (C_i) ^{r_i} ) 
$$
which concludes the proof. 
\end{proof}

Using the duality between homology and cohomology and Poincar\'e duality, we have 
\begin{xalignat}{2} \label{eq:iso}
\begin{split}
 \op{det} H_\bullet ( X, \op{Ad} \rho) \simeq &  \op{det} ( H^1 ( X, \op{Ad} \rho ))  \otimes \bigl ( \op{det} ( H^2 ( X, \op{Ad} \rho)) \bigr)^{-1} \\
\simeq & \bigl( \op{det} ( H^1 ( X, \op{Ad} \rho )) \bigr)^{2}
\end{split}
\end{xalignat}

So  $(\det \psi )^{-1}$ may be viewed as the square of a volume element of $H^1 ( X, \op{Ad} \rho)$. Recall that $g^*$ induces an isomorphism from $H^1 (X, \op{Ad} \rho)$ to a symplectic vector space $(K, \Om)$. The following lemma is an easy  consequence of Theorem \ref{theo:symplectomorphism}.

\begin{lem} \label{lem:Liouville_det}
$g^*$ sends $( \det \psi )^{-1}$ to the square of the Liouville form $\Om^N/ N!$, where $N =\frac{1}{2} \dim K$. 
\end{lem}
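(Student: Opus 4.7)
The plan is to pick a symplectic basis of $(K,\Om)$, transport it through $g^*$ and $h^*$ to bases of $H^1(X,\op{Ad}\rho)$ and $H^2(X,\op{Ad}\rho)$, and then unwind the identification~(\ref{eq:iso}) in those bases; Theorem~\ref{theo:symplectomorphism} will do all the real work.

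Concretely, I would take a symplectic basis $(e_1, f_1, \ldots, e_N, f_N)$ of $K$, so that $\Om^N/N! = e_1^* \wedge f_1^* \wedge \cdots \wedge e_N^* \wedge f_N^*$, and let $\al_i, \al_i' \in H^1(X,\op{Ad}\rho)$ and $\be_i, \be_i' \in H^2(X,\op{Ad}\rho)$ be the preimages of $e_i, f_i$ under $g^*$ and $h^*$ respectively. Theorem~\ref{theo:symplectomorphism} then says the matrix of the Poincar\'e pairing in these bases is block-diagonal with blocks $\bigl(\begin{smallmatrix} 0 & 1 \\ -1 & 0 \end{smallmatrix}\bigr)$, so has determinant $+1$.

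For the map $\psi$, a short diagram chase starting from the defining identity $\psi \circ g = h$ on $H_1(\Si, \op{Ad}\rho)$ together with the Kronecker pairings shows that the transpose $\psi^T : H^2(X,\op{Ad}\rho) \to H^1(X,\op{Ad}\rho)$ equals $(g^*)^{-1} \circ h^*$. Hence $\psi^T$ sends $\be_i \mapsto \al_i$ and $\be_i' \mapsto \al_i'$, so in the Kronecker-dual bases of $H_1(X,\op{Ad}\rho)$ and $H_2(X,\op{Ad}\rho)$ the matrix of $\psi$ is the identity. Rewriting $\det \psi$ via the Kronecker identifications $(\det H_1)^{-1} \simeq \det H^1$ and $\det H_2 \simeq (\det H^2)^{-1}$ turns it into $(\al_1 \wedge \al_1' \wedge \cdots \wedge \al_N') \otimes (\be_1 \wedge \be_1' \wedge \cdots \wedge \be_N')^{-1}$ in $\det H^1 \otimes (\det H^2)^{-1}$.

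Finally I would invoke the Poincar\'e identification $(\det H^2)^{-1} \simeq \det H^1$: since the pairing matrix has determinant $+1$, it sends $(\be_1 \wedge \be_1' \wedge \cdots)^{-1}$ to $\al_1 \wedge \al_1' \wedge \cdots$, and $\det \psi$ identifies with $(\al_1 \wedge \al_1' \wedge \cdots \wedge \al_N')^{\otimes 2}$ in $(\det H^1(X,\op{Ad}\rho))^{\otimes 2}$. Inverting, $(\det \psi)^{-1}$ is the square of the volume form $\al_1^* \wedge (\al_1')^* \wedge \cdots \wedge (\al_N')^*$ on $H^1(X,\op{Ad}\rho)$, whose image under $g^*$ is the square of $e_1^* \wedge f_1^* \wedge \cdots \wedge e_N^* \wedge f_N^* = \Om^N/N!$, as claimed. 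The only real obstacle is the bookkeeping across three interlocking dualities (Kronecker on $H_\bullet/H^\bullet$, Poincar\'e on $X$, and symplectic on $K$), which the symplectic basis resolves in one stroke by making all three associated determinants equal to $+1$.
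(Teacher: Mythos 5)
Your proof is correct and follows exactly the route the paper intends: the paper offers no written proof beyond declaring the lemma an easy consequence of Theorem \ref{theo:symplectomorphism}, and your symplectic-basis computation (transporting $e_i, f_i$ through $(g^*)^{-1}$ and $(h^*)^{-1}$, identifying $\psi^T$ with $(g^*)^{-1}\circ h^*$, and tracking the Kronecker and Poincar\'e identifications of (\ref{eq:iso})) is precisely the bookkeeping that statement leaves to the reader. The sign and duality conventions all check out, so nothing is missing.
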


\section{Application to moduli spaces} \label{sec:appl-moduli-spac}

Let us apply the previous results to the character manifold $\moir  (X)$. Recall that a density of a $n$ dimensional manifold $M$ is a section of the line bundle, whose fiber at $x$ is the space of applications $f: (T_x M)^n \rightarrow \R$ satisfying $f ( Ax_1, \ldots , Ax_n ) = | \det A | f(x_1, \ldots, x_n)$ for any endomorphism $A$ of $T_x M$. Here, we have natural densities on $\moir (X)$ and $\moir ( \Si, u)$ defined as follows: 

\begin{itemize} 
\item Since the tangent space $T_{[\rho]} \moir (X)$ is $H^1 (X, \op{Ad} \rho)$, by the isomorphism (\ref{eq:iso}), the torsion $\tau ( \op{Ad} \rho)$ is the inverse of the square of a density of $ T_{[\rho]} \moir (X)$. This defines a density $\mu_X$ of $\moir (X)$ whose value at $[\rho]$ is $\tau ( \op{Ad} \rho)^{-1/2}$. 
\item For any $u \in \Co (G)^n$, $\moir ( \Si , u)$ is a symplectic manifold, so it has a canonical density $\mu_u$. The symplectic structure of $T_{\rho} \moir ( \Si, u)$ is the form  $\Om$ considered in Lemma \ref{lem:symp_Structure} and if $N = \tfrac{1}{2} \dim \moir ( \Si, u )$, $\mu_u ( [\rho]) = | \Om^{\wedge N} | / N!$. 
\end{itemize}
For any $(u,v ) \in \mathcal{P}$, we defined a diffeomorphism $R_{u.v}$ from $\moir (X , u, v) $ to $ \moir ( \Si , u)$. By the proof of Lemma \ref{lem:tangent}, the linear tangent map of $R_{u,v}$ at $[\rho]$ is the map $g ^* : H^1 ( X, \op{Ad} \rho ) \rightarrow K$.
We deduce from Theorem \ref{theo:torsion} and Theorem \ref{theo:symplectomorphism} via Lemma \ref{lem:Liouville_det} our main result. 

\begin{theo} For any $(u,v) \in \mathcal{P}$, we have on $\moir (X , u, v)$
$$ \mu_X =  \Biggl( \prod_{i=1}^{n} \frac{  \Delta ( u_i ^{r_i} )  }{p_i^{\dim V_i /2}} \Biggr)   R_{u,v} ^* \mu_u $$
with $r_i$ any inverse of $q_i$ modulo $p_i$.
\end{theo}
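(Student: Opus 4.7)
The plan is to assemble the statement directly from the three main structural results already established: the torsion formula of Theorem \ref{theo:torsion}, the identification of the Poincar\'e pairing on $X$ with the symplectic form $\Om$ on $K$ (Theorem \ref{theo:symplectomorphism}), and the translation of $\det\psi$ into a Liouville volume (Lemma \ref{lem:Liouville_det}). The identity is pointwise on $\moir(X,u,v)$, so it suffices to check it at each $[\rho]$.

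First I would unwind the definitions. At $[\rho]\in\moir(X,u,v)$ the density $\mu_X$ equals $\tau(\op{Ad}\rho)^{-1/2}$, viewed as a density on $T_{[\rho]}\moir(X)=H^1(X,\op{Ad}\rho)$ via the isomorphism (\ref{eq:iso}). Applying Theorem \ref{theo:torsion} gives
\[
\mu_X([\rho])=\Biggl(\prod_{i=1}^n\frac{\Delta(\rho(C_i)^{r_i})}{p_i^{\dim V_i/2}}\Biggr)\,(\det\psi)^{-1/2}.
\]
The prefactor depends only on the conjugacy classes $u_i=[\rho(C_i)]$, and by the second remark after Theorem \ref{theo:torsion} we have $\dim V_i=\dim G-\dim u_i$; moreover $\Delta$ is a class function, so the prefactor is exactly the constant $C$ advertised in the statement.

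Next I would handle the remaining factor $(\det\psi)^{-1/2}$. By Lemma \ref{lem:tangent} the tangent map $dR_{u,v}$ at $[\rho]$ is the restriction isomorphism $g^*\colon H^1(X,\op{Ad}\rho)\to K$, and by Lemma \ref{lem:Liouville_det}, $g^*$ sends $(\det\psi)^{-1}$ to the square of the Liouville form $\Om^N/N!$ on $K$. Taking square roots (of densities), this means precisely that $(\det\psi)^{-1/2}$ is the pullback by $g^*$ of the Liouville density on $K=T_{[\rho]}\moir(\Si,u)$, i.e.\ of $\mu_u([\rho|_\Si])$. Equivalently, $(\det\psi)^{-1/2}=R_{u,v}^*\mu_u$ at $[\rho]$.

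Combining these two displays yields the claimed equality of densities on $\moir(X,u,v)$. The only non-routine point is the compatibility of signs/conventions across the three ingredients: one must verify that the two volume elements furnished by Lemma \ref{lem:Liouville_det} (namely the square root of $(\det\psi)^{-1}$ on $H^1(X,\op{Ad}\rho)$ and the Liouville density on $K$) agree under $g^*$ as unsigned densities, so that no extraneous absolute value or orientation choice enters. This is exactly what Lemma \ref{lem:Liouville_det} asserts after passing to densities, so the argument closes without further effort; the main substantive work has already been done in Sections \ref{sec:homology-groups}--\ref{sec:torsion}.
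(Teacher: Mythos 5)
Your proposal is correct and follows exactly the paper's own route: plug Theorem \ref{theo:torsion} into the definition of $\mu_X$, identify the prefactor as a class-function constant, and use Lemma \ref{lem:Liouville_det} (which encapsulates Theorem \ref{theo:symplectomorphism}) together with the identification of $dR_{u,v}$ with $g^*$ to convert $(\det\psi)^{-1/2}$ into $R_{u,v}^*\mu_u$. Nothing essential differs from the argument in Section \ref{sec:appl-moduli-spac}.
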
 

\section{Abelian case} \label{sec:abelian-case}

In this section, we adapt the previous result to the constant coefficient case. We consider the same Seifert manifold $X$ as above and we assume that the Euler number $$\chi = - \sum_{i=1}^{n} \frac{ q_i}{p_i} $$ does not vanish. 

For $Y = X, \Si, \partial \Si$, we let $H_j (Y) := H_j ( Y, \R)$. In contrast to the previous case, the groups $H_0 (X)$ and $H_3 (X)$ do not vanish.  
Introduce the three maps 
$$ f : H_1 ( \partial \Si ) \rightarrow H_1 ( \Si ), \quad g: H_1  (\Si) \rightarrow H_1 ( X ), \quad h: H_1 ( \Si ) \rightarrow H_2 (X)$$
defined as follows. $f$ and $g$ are the morphisms corresponding to the inclusions $\partial \Si \subset \Si$ and $\Si \subset X$ respectively. $h$ sends $\ga \in H_1 ( \Si)$ to the image of $\ga \boxtimes [S^1] \in H_2 (\Si \times S^1)$ in $H_2 ( X)$. 

\begin{prop} \label{prop:morphism_g_h}
$g$ and $h$ are surjective and their kernel is the image of $f$. \end{prop}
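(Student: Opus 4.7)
My plan is to parallel the proof of Theorem \ref{theo:homologie_Seifert}: apply the Mayer-Vietoris long exact sequence to the decomposition (\ref{eq:Seifert_dec}) with trivial real coefficients, simplify each piece by K\"unneth, and compute the boundary maps explicitly. The K\"unneth formula identifies $H_2(\Si \times S^1)$ with $H_1(\Si)$ via $\gamma \otimes [S^1] \mapsto \gamma$ and $H_2(\partial\Si \times S^1)$ with $V = H_1(\partial \Si)$ via $[C_i] \otimes [S^1] \mapsto [C_i]$; since $H_2(D \times S^1) = 0$, under these identifications the MV maps $H_2(\partial\Si \times S^1) \to H_2(\Si \times S^1)$ and $H_2(\Si \times S^1) \to H_2(X)$ become exactly $f$ and $h$. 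Hence exactness at $H_2(\Si \times S^1)$ gives $\ker h = \op{Im}(f)$ for free, and surjectivity of $h$ is equivalent to the vanishing of the connecting map $H_2(X) \to H_1(\partial\Si \times S^1)$, i.e.\ to injectivity of the $H_1$-level MV map $\phi_1$.

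The crux of the argument is to prove $\ker \phi_1 = 0$ using $\chi \neq 0$. The projection of $\phi_1$ onto $H_1(D \times S^1)^{\cup n}$ is computed as in the proof of Theorem \ref{theo:homologie_Seifert} (with each $V_i$ replaced by $\R$): on the $i$-th summand of $V \oplus \R^n = H_1(\partial \Si \times S^1)$ it sends $(u_i, v_i)$ to $q_i u_i + p_i v_i$, whose kernel is spanned by $(-p_i, q_i)$. Any element of $\ker \phi_1$ therefore has the form $(u_i, v_i) = \lambda_i(-p_i, q_i)$ for scalars $\lambda_i$. Imposing that the $H_1(\Si)$-component of $\phi_1$ vanish and using the relation $\sum [C_i] = 0$ in $H_1(\Si)$ forces $\lambda_i p_i = \lambda_n p_n$ for all $i$, while vanishing of the $H_0(\Si) \otimes H_1(S^1) \simeq \R$ component gives $\sum_i \lambda_i q_i = 0$. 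Substituting and using the definition of $\chi$ yields $-\lambda_n p_n \chi = 0$, so $\lambda_n = 0$ and hence all $\lambda_i$ vanish.

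For $g$, the map $\phi_0$ is easily checked to be injective, so $\psi_1$ is surjective and $H_1(X)$ is generated by $g(H_1(\Si))$, by the class of $[S^1] \in H_1(\Si \times S^1)$ and by the classes $\sigma_1, \ldots, \sigma_n$ of the core circles of the solid tori. Applying the relation $\psi_1 \circ \phi_1 = 0$ to the generators $[C_i]$ and $[S^1]$ of $H_1(C_i \times S^1)$ gives, in $H_1(X)$, the identities $g([C_i]) = q_i \sigma_i$ and $[S^1] = p_i \sigma_i$; summing the first over $i$ and using $\sum [C_i] = 0$ in $H_1(\Si)$ yields $\chi [S^1] = 0$, so $[S^1]$ and all the $\sigma_i$ vanish when $\chi \neq 0$, and consequently $H_1(X) = g(H_1(\Si))$. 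The inclusion $\op{Im}(f) \subseteq \ker g$ is then immediate, and equality follows by a dimension count: the $h$-half of the proposition together with Poincar\'e duality $\dim H_1(X) = \dim H_2(X)$ gives $\dim H_1(X) = \dim H_1(\Si) - \dim \op{Im}(f)$, forcing $\dim \ker g = \dim \op{Im}(f)$. The main obstacle is the injectivity of $\phi_1$; once it is established, everything else is routine Mayer-Vietoris bookkeeping.
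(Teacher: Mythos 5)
Your proof is correct and follows essentially the same route as the paper: the Mayer--Vietoris sequence for the decomposition (\ref{eq:Seifert_dec}) with real coefficients, reduced via K\"unneth, with the key step being the injectivity of the $H_1$-level map (the paper's map $B$, your $\phi_1$), which is exactly where $\chi \neq 0$ enters. The only cosmetic difference is in how $\ker g$ is identified: the paper argues directly from the second short exact sequence that $\op{Im} B$ and $0 \oplus H_1(\Si) \oplus 0$ meet transversally with intersection $0 \oplus \op{Im} f \oplus 0$, whereas you use the relations $g([C_i]) = q_i \sigma_i$ and $[S^1] = p_i \sigma_i$ together with a Poincar\'e-duality dimension count --- both are fine.
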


\begin{proof} 
The proof is similar to the one of Theorem \ref{theo:homologie_Seifert}, with the additional difficulty that $H_0 ( X ) \simeq \R$, $H_ 0( \Si)\simeq \R$ and $H_3 (X)\simeq \R$. The Mayer-Vietoris long exact sequence splits into three exact sequences: 
\begin{gather} \label{eq:mayer_vietoris} 
\begin{split} 
0 \rightarrow \R \rightarrow \R^n \xrightarrow{f} H_1 ( \Si) \xrightarrow{h} H_2 ( X) \rightarrow 0  \\
0 \rightarrow \R^{2n} \xrightarrow{B} \R^n \oplus H_1 ( \Si ) \oplus \R \xrightarrow{A} H_1(X) \rightarrow 0 \\
0 \rightarrow \R^n \xrightarrow{C} \R^n \oplus \R \rightarrow \R \rightarrow 0 
\end{split}
\end{gather}
where $C$ and $B$ are given by $C( x) = (x, x_1 + \ldots + x_n)$ and 
$B ( x,y) = ( z, f(x), y_1 + \ldots + y_n)$ with $z \in \R^n$ given by $z_i =  q_i x_i +p_i y_i$. By the first sequence in (\ref{eq:mayer_vietoris}), $h$ is surjective and its kernel is the image of $f$.
One checks that $\op{Im} B$ and $0 \oplus H_1 ( \Si) \oplus 0 $ are transversal subspaces, their intersection being $0 \oplus \op{Im} f \oplus 0$. Using that for any $\ga \in H_1 ( \Si)$,  $A ( 0, \gamma , 0 ) = g( \gamma)$, one deduces from the second sequence of (\ref{eq:mayer_vietoris}) that $g$ is surjective with kernel the image of $f$. 
\end{proof}
Let $\overline{\Si}$ be the closed surface obtained by gluing a disc to each boundary component of $\Si$. The inclusion $\Si \subset \overline{\Si}$ induces an isomorphism $H_1 ( \overline{\Si} ) \simeq H_1 ( \Si ) / \op{Im} f$. So by Proposition \ref{prop:morphism_g_h}, we have two isomorphisms 
$$\tilde{g} : H_1 ( \overline{\Si}) \rightarrow H_1 ( X), \qquad \tilde{h} : H_1 ( \overline{\Si}) \rightarrow H_2 ( X).$$ 

\begin{prop} \label{prop:abelian-poincare}
For any $\al \in H^1 (X)$ and $\be \in H^2 (X)$, we have $$\al \cdot_X \be = (\tilde{g}^* \al) \cdot_{\overline{\Sigma}} ( \tilde{h}^* \be ) $$ where $\cdot_X$ and $\cdot_{\overline{\Sigma}}$  denote the Poincar\'e pairings of $X$ and $\overline{\Si}$ respectively. 
\end{prop}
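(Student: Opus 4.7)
The proof will mirror the de Rham argument of Theorem \ref{theo:symplectomorphism}, replacing the flat bundle $\op{Ad} \rho$ by the trivial line bundle.

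First, I would exploit the hypothesis $\chi \neq 0$ to establish vanishing of certain restriction maps. By (\ref{eq:coef_chirurgie}), each curve $\varphi_i ( \partial D)$ bounds a disc in $X$, giving $p_i [C_i] = q_i [S^1]$ in $H_1 (X, \R)$. Combined with $\sum_i [C_i] = 0$ (as $[\partial \Si] = 0$), this yields $-\chi [S^1] = 0$, hence $[S^1] = 0$ and $[C_i] = 0$ in $H_1(X, \R)$. Therefore $g \circ f = 0$, and the first sequence of (\ref{eq:mayer_vietoris}) already gives $h \circ f = 0$. Dualising, the pullbacks $f^* g^* \al$ and $f^* h^* \be$ vanish in $H^1 ( \partial \Si)$ for any $\al \in H^1(X)$, $\be \in H^2(X)$.

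Second, I would construct de Rham representatives exactly as in Theorem \ref{theo:symplectomorphism}: find a closed $\be \in \Om^2 (X)$ that vanishes in a neighborhood of $Z = \tilde{\varphi}_1 ( D \times S^1 ) \cup \ldots \cup \tilde{\varphi}_n (D \times S^1)$ and satisfies $\be = p_\Si^* \tilde \be \wedge p_{S^1}^* \tau + d \ga$ on $\Si \times S^1$ with $\tilde \be \in \Om^1 (\Si)$ closed, $\tau \in \Om^1(S^1)$ with $\int_{S^1} \tau = 1$; and a closed $\al \in \Om^1 (X)$ such that $\al = p_\Si^* \tilde\al$ on $\Si \times S^1$ with $\tilde \al \in \Om^1 (\Si)$ closed. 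Using $f^* g^* \al = 0$ and $f^* h^* \be = 0$, the classes $[\tilde\al]$ and $[\tilde\be]$ restrict to zero in $H^1 ( \partial \Si)$, so after modification by exact forms (absorbing the change into $\ga$) I can arrange that $\tilde\al$ and $\tilde\be$ vanish identically in a neighborhood of $\partial \Si$. Concretely, one chooses primitives on a collar of the boundary and cuts them off.

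Third, by exactly the same Stokes and Fubini computation used at the end of Theorem \ref{theo:symplectomorphism}, the representatives give
\begin{equation*}
\al \cdot_X \be \;=\; \int_X \al \wedge \be \;=\; \int_{\Si \times S^1} \al \wedge \be \;=\; \int_\Si \tilde\al \wedge \tilde\be.
\end{equation*}

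Fourth, I would identify this integral with the right-hand side of the claim. Since $\tilde\al$ and $\tilde\be$ vanish near $\partial \Si$, they extend by zero to smooth closed forms $\hat\al, \hat\be$ on $\overline\Si$. Because $g = \tilde g \circ i_*$ and $h = \tilde h \circ i_*$ where $i : \Si \hookrightarrow \overline\Si$ is the inclusion, and because $i^* : H^1 ( \overline\Si) \rightarrow H^1 ( \Si)$ is injective (as $i_*$ is surjective by Proposition \ref{prop:morphism_g_h}), the identities $i^*[\hat\al] = [\tilde\al] = g^* \al = i^* \tilde g^* \al$ force $[\hat\al] = \tilde g^* \al$, and similarly $[\hat\be] = \tilde h^* \be$. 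Hence
\begin{equation*}
(\tilde g^* \al) \cdot_{\overline\Si} (\tilde h^* \be) \;=\; \int_{\overline\Si} \hat\al \wedge \hat\be \;=\; \int_\Si \tilde\al \wedge \tilde\be,
\end{equation*}
which completes the proof.

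The only technical hurdle is the modification step in which $\tilde\al$ and $\tilde\be$ are made to vanish near $\partial \Si$ while preserving the product-type structure of $\al$ and $\be$ on $\Si \times S^1$; this is routine once one works in a collar neighborhood of $\partial \Si$ and absorbs the coboundary corrections into the exact terms $d \ga$ that are already present.
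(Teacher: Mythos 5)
Your proof is correct and follows essentially the same route as the paper's: product-type de Rham representatives on $\Si \times S^1$ that vanish near the glued solid tori, a Stokes/Fubini reduction to $\int_\Si \tilde\al \wedge \tilde\be$, and identification of that integral with the Poincar\'e pairing on $\overline{\Si}$. The only place where ``exactly as in Theorem \ref{theo:symplectomorphism}'' glosses over a genuinely new point is the existence of a representative with $\al = p_\Si^* \tilde\al$ on $\Si \times S^1$: with trivial coefficients the K\"unneth decomposition $H^1(\Si \times S^1) \simeq H^1(\Si) \oplus H^0(\Si) \otimes H^1(S^1)$ has an extra summand, so you must additionally argue that the class of $\al|_{\Si \times S^1}$ has no $p_{S^1}^* \tau$ component --- this follows from $[S^1] = 0$ in $H_1(X,\R)$, a fact you do establish in your first step from $\chi \neq 0$ but use only to get $g \circ f = 0$, and it is precisely the paper's opening observation that the image of $H^1(X) \rightarrow H^1(\Si \times S^1)$ is contained in the $H^1(\Si)$ summand.
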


\begin{proof} 
The proof is similar to the one of Theorem \ref{theo:symplectomorphism}. First, since the image of $B$ in (\ref{eq:mayer_vietoris}) contains $0 \oplus 0 \oplus H_0 ( \Si)$, the image of $H^2( X) \rightarrow H^1 ( \Si \times S^1) \simeq H^1 ( \Si ) \oplus H^0 ( \Si)$ is contained in $H^1 ( \Si)$. Consequently  any class $\al$ of $H^1 (X)$ has a representative $a \in \Om ^1(X)$ such that
$$ a = p^* \tilde a  \qquad \text{  on $\Si \times S^1$ } $$ 
with $p : \Si \times S^1 \rightarrow \overline{\Si}$ the projection and $\tilde a \in \Om^1 ( \overline {\Si})$ a representative of $\tilde{g} ^* \al$.    Second, any class $\be \in H^2 (X)$ has a representative $b \in \Om^2 ( X)$ whose support is contained in an open subset of $\Si \times S^1$ and such that 
$$ b = p^* \tilde{b} \wedge q^* \tau + d \gamma$$
where $\tilde{b} \in \Om^1 ( \overline{\Si} )$ is a representative of $ \tilde{h}^* \be$, supported in an open subset of $\Si$, $q$ is the projection $\Si \times S^1 \rightarrow S^1$, $\tau \in \Om^1 (S^1)$ is such that $\int_{S^1} \tau = 1$ and $\ga \in \Om^1 ( \Sigma \times S^1)$. Finally, one checks that
$$ \int_{X} a \wedge b = \int_{\overline{\Si}} \tilde{a} \wedge \tilde{b}.$$
using Stokes' formula. 
\end{proof}
We can also compute the torsion of $X$ as in Theorem \ref{theo:torsion}. Since $H_0(X)$ and $H_3(X)$ have rank one, the torsion belongs to $H_0(X) \otimes \bigl(\det H_1(X) \bigr)^{-1} \otimes \det H_2 (X) \otimes \bigl(H_3 (X))^{-1}$.  
 
\begin{prop} \label{prop:abelian-torsion}
The Reidemeister torsion of $X$ is given by 
$$ \tau (X) =  \chi \prod_{i=1}^{n} p_i \;  [x] \otimes \det \psi \otimes  [X]^{-1}$$ 
where $\chi = - \sum q_i / p_i$ is the Euler number of $X$, $x \in X$ and $[x] \in H_0 (X)$ is the corresponding class,
$\psi $ is the map $\tilde{h} \circ \tilde{g}^{-1} : H_1 (X) \rightarrow H_2(X)$ and $[X] \in H_3(X)$ is the fundamental class. 
\end{prop}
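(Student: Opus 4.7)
The plan is to mirror the proof of Theorem~\ref{theo:torsion}, now with trivial $\R$-coefficients in place of $\op{Ad}\rho$. The main engine is the Mayer--Vietoris decomposition (\ref{eq:Seifert_dec}) combined with Property~\ref{item:MV} of Appendix~\ref{sec:reidemeister-torsion}: it presents $\tau(X)$ as the torsion of the MV long exact sequence multiplied by the torsions of the pieces $\Si\times S^1$, $C_i\times S^1$, and $\tilde\varphi_i(D\times S^1)$. Each piece is a product with $S^1$, so since $\chi(S^1)=0$ Properties~\ref{item:prod} and~\ref{item:cercle} will force all three piece-torsions to be trivial once bases are chosen coherently with the K\"unneth splittings. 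Thus $\tau(X)$ reduces to the torsion of the MV long exact sequence, which by Proposition~\ref{prop:morphism_g_h} splits into the three short exact sequences (\ref{eq:mayer_vietoris}).

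The plan is then to compute the torsion of each short exact sequence separately and multiply the contributions. The outer terms $H_0(X)\simeq\R$ and $H_3(X)\simeq\R$ are handled by recording the generators $[x]$ and $[X]$; these produce the factor $[x]\otimes[X]^{-1}$ in the answer, which is the main formal difference from Theorem~\ref{theo:torsion} where these groups vanished. The first short sequence, through the isomorphism $\tilde h$, will supply the $\det\tilde h$ half of $\det\psi=\det(\tilde h\circ\tilde g^{-1})$, and the third sequence contributes trivially with the obvious basis choice.

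All the numerical content is carried by the middle sequence $0\to\R^{2n}\xrightarrow{B}\R^n\oplus H_1(\Si)\oplus\R\xrightarrow{A}H_1(X)\to 0$ with $B(x,y)=(qx+py,f(x),y_1+\cdots+y_n)$, where $p,q$ are the diagonal matrices with entries $p_i,q_i$. The computation goes as follows. Pick the generator $(1,\ldots,1)$ of $\ker f$; it can only be balanced in the first coordinate of $B$ by taking $y_i=-q_i/p_i$, and the last coordinate then reads $\sum y_i=-\sum q_i/p_i=\chi$. The remaining $n-1$ basis vectors in a complement to $\ker f$ inside $\R^n$ are coupled only through the $p$-block and contribute $\prod p_i$. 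Transporting the $H_1(X)$-quotient back to $H_1(\overline\Si)$ through $\tilde g$ converts the leftover determinant into $\det\tilde g^{-1}$, completing the $\det\psi$ factor.

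The hard part will be the bookkeeping of bases, signs and orientations across the three short exact sequences, so that the scalar extracted is exactly $\chi\prod p_i$ rather than some other combination of $p_i$ and $q_i$. The appearance of $\chi$ is the geometric crux: the failure of the generator of $\ker f$ to lift to a vector in $\R^{2n}$ satisfying both $qx+py=0$ and $\sum y_i=0$ is precisely measured by the Euler number, and this forced obstruction is what produces $\chi$ in the final formula.
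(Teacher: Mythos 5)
Your proposal follows essentially the same route as the paper's proof: reduce via Property~\ref{item:MV} to the torsion of the Mayer--Vietoris sequence (the piece torsions being trivial because each piece is a product with $S^1$ and $\tau(\R_{S^1})=1$ by Properties~\ref{item:prod} and~\ref{item:cercle}), split that sequence into the three short exact sequences (\ref{eq:mayer_vietoris}), and extract the scalar $\chi\prod p_i$ from the determinant of $B$, with your ``lifting obstruction'' for the generator of $\ker f$ being exactly the mechanism the paper records as $B(\delta\otimes\delta)=\chi\bigl(\prod p_i\bigr)\,\delta\otimes\rho\otimes e$. One small slip in the bookkeeping you defer: the factor $\prod_{i=1}^{n}p_i$ is contributed by the $n$ basis vectors of the second $\R^n$-summand of $\R^{2n}$ (the $[S^1]$-directions, which hit the $p$-block), not by the $n-1$ vectors complementary to $\ker f$ in the first summand --- those instead supply the $\op{Im} f$-part of a basis of $H_1(\Si)$ with unit determinant.
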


\begin{proof} We adapt the proof of Theorem \ref{theo:torsion}.  Let $e=1 \in \R$, $(e_i)$ be the canonical basis of $\R^n$, $\delta = e_1 \wedge \ldots \wedge e_n$, $\rho = f(e_1) \wedge \ldots \wedge f( e_{n-1})$ and $\si \in \wedge^{2g} H_1( \Si)$ such that $\rho \wedge \si$ is a generator of $\wedge ^{2g + n -1 } H_1 ( \Si)$. Then one checks that the isomorphisms corresponding to the three exact sequences in (\ref{eq:mayer_vietoris}) send 1 into $e \otimes \delta ^{-1} \otimes ( \rho \wedge \si) \otimes h( \si) ^{-1}$, $ \chi^{-1} ( \prod p_i )^{-1} e \otimes ( \delta \otimes ( \rho \wedge \si) \otimes e)^{-1}\otimes h( \si)^{-1}$ and $ \delta^{-1}\otimes ( \delta \otimes e ) \otimes e^{-1}$ respectively. The factor $\chi (\prod p_i )$ appears because $B( \delta \otimes \delta) =  \chi (\prod p_i ) \delta \otimes \rho \otimes e$. The torsions of $\partial \Si \times S^1$,  $\partial \Si \times D$  and $\Si \times S^1$ are respectively $\delta \otimes ( \delta \otimes  \delta)^{-1} \otimes \delta$, $\delta \otimes \delta^{-1}$ and $( \rho \wedge \si) \otimes (\delta \otimes ( \rho \wedge \si) \otimes e)^{-1} \otimes e$. We conclude with Property \ref{item:MV} of appendix \ref{sec:reidemeister-torsion}. 
\end{proof}

Trivialising $H_0 (X)$ and $H_3(X)$ by sending $[x]$ and $[X]$ to 1,
and identifying $H_1(X)$ with the dual of $H_2 (X)$ by Poincar\'e
duality, the inverse of square root of the torsion gets identified
with an element of $\det H_1 (X)$. By propositions
\ref{prop:abelian-poincare} and \ref{prop:abelian-torsion}, the
torsion satisfies
\begin{gather} \label{eq:torsion_Liouville_abelien} 
 \tilde{g} ^* ( \tau (X) )^{-1/2} = \Bigl| \chi \prod_{i=1}^{n} p_i \Bigr|^{-1/2}  \mu
\end{gather}
where $\mu \in \det H_1 ( \overline{\Si})$ is the Liouville density of $H^1 ( \overline{ \Si})$. 

This may be applied to the space $\jac (X)$ consisting of representation of $\pi_1 (X)$ in $\uun$ as follows. First, for any connected compact manifold $Y$,  $\jac (Y)$  is an abelian Lie group, the product being the pointwise multiplication.  The Lie algebra of $\jac (Y)$ is the space of morphisms from $\pi_1(Y)$ to $\R$, which identifies with $H^1 (Y)$.  In particular for the Seifert manifold $X$, the Lie algebra of $\jac (X)$ being $H^1(X)$, $(\tau (X))^{-1/2}$ determines an invariant density of $\jac (X)$. Furthermore, the Lie algebra of $\jac (\overline{\Si})$ being $H^1( \overline{\Si})$, $\jac (\overline{\Si})$ has an invariant symplectic structure and a corresponding Liouville density.  

For any $(u,v) \in \uun ^{n+1}$, let $\jac (X, u,v)$ be the subset of $\jac ( X)$ consisting of the representations $\rho$ such that $\rho (C_i) = u_i$ for any $i$ and $\rho ( S^1) = v$. Then 
\begin{gather} \label{eq:dec_pi0}
 \jac ( X) = \bigcup_{(u,v ) \in \mathcal{Q}} \jac ( X, u, v ) 
\end{gather}
where $\mathcal{Q}$ is the set of $(u,v) \in \uun ^{n+1}$ such that $u_1 \ldots u_n =1$ and for any $i$, $u_i ^{p_i} =v^{q_i} $. Since the Euler number $\chi$ does not vanish, $\mathcal{Q}$ is finite. 
Furthermore, for any $(u,v) \in \mathcal{Q}$, $\jac (X,u,v)$ is connected. So (\ref{eq:dec_pi0}) is the decomposition of $\jac (X)$ into connected components. 

Let $\mathbf 1 = ( 1,\ldots , 1) \in \uun$. $\jac (X, \mathbf{1}, 1)$ is the component of the identity of $\jac (X)$. We have a natural Lie group isomorphism $\Phi$ from $\jac (X, \mathbf{1}, 1)$ to $\jac( \overline{\Si})$, such that for any $\rho \in \jac( X, \mathbf{1},1)$, the restrictions of $\rho$ and $\Phi ( \rho)$ to $\Si$ are the same. The linear tangent map at the identity to $\Phi$ is the adjoint map to the map $\tilde{g} : H_1 ( \overline{\Si}) \rightarrow H_1 ( X)$. Thus Equation (\ref{eq:torsion_Liouville_abelien}) computes the invariant density of $\jac ( X, \mathbf{1}, 1)$ in terms of the pull back by $\Phi$ of the Liouville density. We recover in this way  Theorem 9 of \cite{McLe}.

\appendix

\section{Representation space} \label{sec:representation-space}

The general theory describing the smooth structure of a representation space is rather involved and belongs more to algebraic geometry, \cite{LuMa}. In this appendix, we summarize the basic general facts we need, remaining in the context of differential geometry. 

Let $G$ be a connected Lie group and $\pi$ be a finitely generated group. 
Let $\rep ( \pi)$ be the space of representations of $\pi$ in $G$. For any set of generators $a = (a_1, \ldots , a_N)$ of $\pi$, the map  
$$\xi_a : \rep ( \pi) \rightarrow G^N, \qquad \rho \rightarrow (\rho (a_1), \ldots, \rho (a_N)),$$
 is injective, and allows us to identify $\rep ( \pi)$ with $\xi_a( \rep( \pi))$. If $a= ( a_1, \ldots , a_N)$ and $b = (b_1, \ldots, b_M)$ are two  sets of generators, the bijection $\xi_b \circ \xi_a^ {-1}$  from $\xi_a (\rep ( \pi))$ onto $\xi_b ( \rep  ( \pi))$ is a homeomorphism. Indeed,  expressing the $a_i$'s in terms of the $b_j$'s, we obtain a smooth map $\varphi : G^N \rightarrow G^M$ extending $\xi_b  \circ \xi_a ^{-1}$. We endow $\rep ( \pi)$ with the topology such that for any set of generators $a$ of $\pi$, $\xi_a$ is a homeomorphism onto its image.

Let $\rep ^{\op{s}} ( \pi)$ be the set of representations $\rho$ of $\pi$ in $G$ admitting an open neighborhood $U$ and a set of generators $(a_1, \ldots , a_N)$ such that $\xi_a ( U)$ is a smooth submanifold of $ G^N$. $\rep ^{\op{s}} (\pi)$ has a unique manifold structure such that for any such pairs $(U, a)$, the map $\xi_a : U \rightarrow \xi_a ( U)$ is a diffeomorphism. Indeed, arguing as above, we see that for any two pairs $(U, a)$ and $(V,b)$ the map $\xi_b \circ \xi_a ^{-1} : \xi_a (U\cap V) \rightarrow \xi_b (U \cap V)$ is a diffeomorphism.  

For any representation $\rho$ of $\pi$ in $G$, composing $\rho$ with the adjoint representation, the Lie algebra $ \mathfrak{g}$ becomes a left $G$-module. Consider the corresponding cochain complex in degrees 0 and 1: $C^0 ( \pi, \op{Ad} \rho) = \mathfrak{g}$, $C^1 ( \pi, \op{Ad \rho}) = \op{Map} ( \pi, \mathfrak{g})$, the differential in degree 0 is  
\begin{gather*} \label{eq:def_drho}
d_{\rho} : \mathfrak{g} \rightarrow C^1(\pi, \op{Ad}  \rho),   \quad \xi \rightarrow \bigl( \ga \rightarrow  \op{Ad}_{\rho ( \ga)} \xi - \xi  \bigr) 
\end{gather*}
and the space of 1-cocycle
$$ Z^1 ( \pi, \op{Ad} \rho ) = \bigr\{ \tau : \pi \rightarrow \mathfrak{g}; \;  \forall \ga_1, \ga_2 \in \pi, \; \tau ( \ga_1 \ga_2) = \tau ( \ga_1 ) + \op{Ad}_{\rho ( \ga_1)} \tau ( \ga_2) \bigl\}.$$

For any $ \ga \in \pi$, the map $e_{\ga} : \rep ( \pi ) \rightarrow G$ sending $\rho$ into $\rho ( \ga)$ is continuous. Its restriction to $\rep ^{\op{s}} ( \pi)$ is smooth. 
If $\rho \in \rep ^{\op{s}} ( \pi )$, we have a natural map from $T_{\rho} \rep ^{\op{s}} ( \pi)$ to $Z^1 ( \pi, \op{Ad} \rho )$ sending $\dot{\rho}$ to the cocycle $\tau$ given by 
$$ \tau ( \ga) = R_{ \rho( \ga) ^{-1}} T_{\rho} e_\ga (\dot{ \rho}), \qquad \forall \ga \in \pi,$$
where for any $g \in G$, $R_{g^{-1}} : T_{g} G \rightarrow \mathfrak{g}$ is the linear map tangent to the right multiplication by $g^{-1}$. It is easily seen that this map is well-defined and injective, so we consider the tangent space $T_{\rho} \rep ^{\op{s}} ( \pi)$ as a subspace of $Z^1 ( \pi, \op{Ad} \rho )$.

$G$ acts on $\rep ( \pi)$ by conjugation. The action preserves $\rep ^{\op{s}} ( \pi)$. A straightforward computation shows that the infinitesimal action at $\rho \in \rep^{\op{s}} ( \pi)$ is the differential $d_{\rho}$ introduced above.

Assume from now on that $G$ is compact. The subset $\rep^0 ( \pi)$ of $\rep ( \pi)$ consisting of irreducible representations is open. Indeed, if $(a_1, \ldots , a_n)$ is any set of generators, then $\xi_a ( \rep^0 ( \pi) ) = \xi_a ( \rep ( \pi) ) \cap (G^N)^0$ where $(G^N)^0$ consists of the $N$-uplets whose centralizer in $G$ is the center.  By the slice theorem for action of compact Lie group, $(G^N)^0$ is open in $G^N$, because it is either empty or the principal stratum for the diagonal action of $G$ on $G^N$ by conjugation.

Set $\rep ^{\op{s}, 0} ( \pi) := \rep ^0 ( \pi ) \cap \rep ^{\op{s}} ( \pi)$. The quotient space $\mo ^{\op{s},0} ( \pi) := \rep ^{\op{s}, 0} ( \pi) / G$ is a smooth manifold because it is the quotient of a smooth manifold by a smooth action of the compact Lie group $G$ with constant isotropy $Z(G)$. Furthermore,  for any $\rho \in \rep ^{\op{s}, 0} ( \pi)$,  the infinitesimal action at $\rho$ being $d_{\rho}$, we have 
$$ H^0 ( \pi , \op{Ad} \rho ) = \ker d_{\rho} = \mathfrak{z} ( \mathfrak{g}) $$
where $\mathfrak{z} ( \mathfrak{g})$ is the Lie algebra of the center of $G$. Furthermore $T_{[\rho]}  \mo ^{\op{s},0} ( \pi)$ identifies with a subspace of $H^1 ( \pi, \op{Ad} \rho) = Z^1 ( \pi , \op{Ad} \rho) / \op{Im} d _{\rho}$. 

\section{Reidemeister Torsion} \label{sec:reidemeister-torsion}

Let $M$ be a compact manifold possibly with a non empty boundary. Let $E \rightarrow M$ be a flat real vector bundle equipped with a flat metric. Denote by $ \det H_{\bullet} (E)$ the line
$$\det H_{\bullet} (E) =  \det H_0 (E) \otimes (\det H_1 (E))^{-1} \otimes \ldots \otimes ( \det H_{n} (E))^{(-1)^n} $$
where $n$ is the dimension of $M$ and for any finite dimensional vector space $V$, $\det V = \bigwedge ^{\op{top}} V$. In the acyclic case, $\det H_{\bullet} (E) = \R$.   
The Reidemeister torsion of $E$ is a non-vanishing vector 
$ \tau (  E) \in \det H_{\bullet} ( E)  $ well-defined up to sign. Let us recall briefly its definition. 

Let $K$ be the simplicial complex of a smooth triangulation of $X$. For any cell $\si$ of $K$, let $E_\si$ be the space of flat sections of the restriction of $E$ to $\si$.  Introduce the complex $C_\bullet (K,E)$ where $C_k (K,E) = \bigoplus_{ \dim \si = k} E_{\si}$ with the usual differential. Then the $H_k (E)$ are the homology groups of $C_{\bullet} (K,E)$. Consequently, we have an isomorphism $\det C_{\bullet}(K,E) \simeq \det H_{\bullet} (E)$. Furthermore, for any cell $\si$, $E_\si$ is an Euclidean space. So $C_k (K, E)$ has a natural scalar product where the $E_\si$ are mutually orthogonal, and $\det H_{\bullet} (E)$ inherits an Euclidean product by the previous isomorphism. The Reidemeister torsion $\tau (E)$ is by definition a unit vector of $\det H_{\bullet} ( E) $. It does not depend on the choice of the triangulation, cf. \cite{Mi}, Section 9. 

The torsion satisfies the following properties, cf. \cite{KwSz} for \ref{item:somme}, \ref{item:prod} and \cite{Mi}, Section 3 for \ref{item:MV}. 
\begin{enumerate} 
\item \label{item:somme}
Let $E = E_1 \oplus E_2$ where $E_1$ and $E_2$ are two flat Euclidean vector bundles with base $M$. Then we have a natural isomorphism $H_\bullet (E) \simeq H_\bullet (E_1) \oplus H_\bullet (E_2)$. The corresponding isomorphism $\det H_\bullet (E) \simeq \det H_\bullet (E_1) \otimes \det H_\bullet (E_2)$ sends $\tau (E)$ into $\tau (E_1) \otimes \tau (E_2)$. 

\item \label{item:prod}
Let $E_1 \rightarrow M_1$ and $E_2 \rightarrow M_2$ be two flat Euclidean vector bundles. Assume that $M_1$ is closed. Set $M = M_1 \times M_2$ and $E = E_1 \boxtimes  E_2 $. By K{\"u}nneth theorem,  we have $H_\bullet (E) \simeq H_\bullet (E_1)\otimes H_{\bullet} (E_2)$. The corresponding isomorphism 
$$\det H_\bullet (E) \simeq \bigl( \det H_\bullet (E_1) \bigr)^{\chi(E_2)} \otimes \bigl( \det H_\bullet (E_2) \bigr)^{\chi(E_1)}$$ sends $\tau (E)$ into $ \tau (E_1) ^{\chi (E_2)} \otimes  \tau (E_2)^{\chi(E_1)}$. 

\item  \label{item:MV}
Let $E$ be a flat Euclidean vector bundle whose base $M$  is obtained by gluing two manifolds $M_1$, $M_2$ along their boundary $ N$. By the Mayer-Vietoris exact sequence, we have an isomorphism
$$ \det H_{\bullet} ( E ) \otimes \det H_{\bullet} ( E|_N) \simeq \det  H_{\bullet} ( E|_{M_1} ) \otimes \det H_\bullet (E|_{M_2}) .
$$
This isomorphism sends $ \tau (E) \otimes \tau (E|_{N}) $ to $\tau ( E|_{M_1} ) \otimes \tau ( E|_{M_2} )$. 
\end{enumerate}
Finally, it is a classical exercise to compute the torsion of a bundle over a circle. 
\begin{enumerate} \setcounter{enumi}{3}
\item \label{item:cercle}
Let $E$ be a flat Euclidean vector bundle $E$ on an oriented circle $C$. Let $p \in C$  and let $\varphi: E_p \rightarrow E_p$ be the holonomy of $C$. Let $H = \ker ( \varphi - \op{id} ) $. We have two isomorphisms $H_0 ( E) \simeq H$ and $ H_1 (E) \simeq H $
sending $u \in H$ into $[p] \otimes u$ and $[C] \otimes u$ respectively. Thus $\det H_\bullet (E) \simeq \det H \otimes ( \det H )^{-1} \simeq \R$, so that the torsion may be considered as a real number. With this convention, we 
\begin{gather*} %\label{eq:torsion_cercle}
   \tau (E) =  {\det }^{-1} \bigl( (\varphi - \op{id} )|_{H^{\perp}} \bigr)
\end{gather*}
where $H^{\perp}$ is the orthogonal complement of $H$. 
\end{enumerate}

\bibliographystyle{alpha}
\bibliography{bibtorsion}

\Addresses

\end{document}